\newcommand{\pl}{\partial}
\numberwithin{equation}{section}
	\theoremstyle{nonumberplain}
	\newtheorem{proof}{Proof}
\newtheorem{theorem}{Theorem}[section]
\newtheorem{lemma}{Lemma}[section]
\newtheorem{definition}{Definition}[section]
\begin{document}
	\title{\bf BINet: Learning to Solve Partial Differential Equations with Boundary Integral Networks} 
	\date{}
	\author{\sffamily Guochang Lin$^1$, Pipi Hu$^{1,3}$, Fukai Chen$^2$, Xiang Chen$^{4}$, \\\sffamily Junqing Chen$^{2}$, Jun Wang$^{5}$, Zuoqiang Shi$^{2,3*}$ \\
		{\sffamily\small $^1$ Yau Mathematical Sciences Center, Tsinghua University, Beijing, China}\\
		{\sffamily\small $^2$ Department of Mathematical Sciences, Tsinghua University, Beijing, China}\\
		{\sffamily\small $^3$ Yanqi Lake Beijing Institute of Mathematical Sciences and Applications, Beijing, China}\\
		{\sffamily\small $^4$ Noah’s Ark Lab, Huawei, China}\\
		{\sffamily\small $^5$ University College London, London, United Kingdom}}
	\renewcommand{\thefootnote}{\fnsymbol{footnote}}
	\footnotetext[1]{Corresponding author. \emph{Email address: }zqshi@tsinghua.edu.cn (Z. Shi)}
	\maketitle
	
	%MS+++++++++++++++++++++ Abstract +++++++++++++++++++++++++
	{\noindent\small{\bf Abstract:}
		We propose a method combining boundary integral equations and neural networks (BINet) to solve partial differential equations (PDEs) in both bounded and unbounded domains. 
		%In BINet, instead of the original PDE, the associate boundary integral equation is solved using neural networks.
		Unlike existing solutions that directly operate over original PDEs, BINet learns to solve, as a proxy, associated boundary integral equations using neural networks. 
		%The main idea is to solve the boundary integral equations instead of the original PDEs. 
		%Comparing to PDEs, boundary integral equations match the neural network approach better. 
		The benefits are three-fold. %There are many advantages to solve boundary integral equations using neural networks. 
		%firstly, so long as the boundary conditions are fitted, the PDE would automatically be satisfied with single or double layer representations according to XXX. 
		Firstly, only the boundary conditions need to be fitted since the PDE can be automatically satisfied with single or double layer representations according to the potential theory.
		%\jun{complete the sentence? Do we need to mention the Green function or some theorem here at all here?} \hp{I am not sure} 
		Secondly, the dimension of the boundary integral equations is typically smaller, and as such, the sample complexity can be reduced significantly. Lastly,  in the proposed method, all differential operators of the original PDEs have been removed, hence the numerical efficiency and stability are improved. 
		Adopting neural tangent kernel (NTK) techniques, 
		we provide proof of the convergence of BINets in the limit that the width of the neural network goes to infinity. Extensive numerical experiments show that, without calculating high-order derivatives, BINet is much easier to train and usually gives more accurate solutions, especially in the cases that the boundary conditions are not smooth enough. Further, BINet outperforms strong baselines for both one single PDE and parameterized PDEs in the bounded and unbounded domains.

	}
	
	%\vspace{1ex}
	%{\noindent\small{\bf Keywords:}
	%    Keywords1; Keywords2;...}
	%MS++++++++++++++++++++++++++++++ Main body ++++++++++++++++++++
	\section{Introduction}
	\label{Introduction}
	Partial differential equations (PDEs) have been widely used in %the mathematically-oriented
	scientific fields and engineering applications, such as Maxwell's equations in optics and electromagnetism \cite{griffiths2005introduction},
	Navier–Stokes equations in fluid dynamics \cite{temam2001navier}, the Schr\"{o}dinger equations in the quantum physics \cite{schrodinger1926undulatory}, and Black-Scholes equations for call option pricing in finance \cite{macbeth1979empirical}. Therefore, finding the solution to PDEs has been a critical topic in research over the years. However, in most cases, the analytical solution of PDEs is infeasible to obtain, such that numerical methods become the major bridge between PDE models and practical applications.
	% As the analytical solution is impossible to obtain in most cases, numerical methods become the only bridge between PDE models and real applications.
	
	In the past decade, deep learning has achieved great success in computer vision, natural language processing, and many other topics \cite{goodfellow2016deep}. It is found that deep neural networks (DNNs) have the attractive capability in approximating functions, especially in high dimensional space. Therefore, DNNs hold great potential in solving PDEs with the promise of providing a good \emph{ansatz} to represent the solution, where the parameters can be obtained by training DNNs with proper loss functions.
	% People found that deep neural networks (DNNs) are very powerful in approximate functions especially in high dimensional space. From the point of view of solving PDE, DNNs provide a good ansatz to represent the solution. Then the solution can be obtained by training DNNs with proper loss functions. 
	%Thank the frames established in deep learning, it is easy to train a neural network to solve PDE. %   At the same time, it attracts more and more attention from other scientific areas including solving PDEs because of the powerful representation abilities of neural networks. 
	
	In the literature, many efforts have been devoted to developing DNN-based methods for solving different kinds of PDEs, 
	such as DGM \cite{sirignano2018dgm}, Deep-Ritz \cite{weinan2018deep}, and PINN \cite{raissi2019physics}. The main idea of these methods is to use a neural network to approximate the solution of the PDE directly. The loss function is designed by either incorporating the PDE residual and the boundary or initial conditions, or the energy functional derived from the variational form of the PDE. 
	
	However, two important issues are not fully considered in most existing works. First, PDEs are merely utilized to construct the loss function, and the essence behind PDEs may be further explored to design a new network structure to cater the need for solving differential equations. Second, when it comes to complex problems, such as PDEs with oscillatory or even singular solutions, failure of the aforementioned methods is frequently reported \cite{wang2020and} due to high order differentiation of the neural networks with respect to the inputs. The appearance of high-order derivatives may lead to instability in training \cite{zhu2021local} (for example, amplified oscillation or singularities) such that the network can not find the exact solution. %New methods should be put forward to alleviate this problem brought by extra differentiation.

	To address the two issues above, in this paper, we propose a novel method, named BINet, combining boundary integral equations and deep learning to solve PDEs. Utilizing
	fundamental solutions of PDE and Green's formula \cite{kellogg1953foundations}, the solution to PDE can be expressed in the form of a boundary integral, where the explicit fundamental solution of PDE serves as the integral kernel. A new network structure is then designed based on this integral expression of the solution such that the output of our network can satisfy the PDE automatically. Since the PDE has been satisfied, we only need to take the boundary condition as the supervisory signal for the loss. In BINet, the prior information provided by the PDE is fully integrated into the network. Moreover, the differential operator is substituted by an integral operator, which avoids the extra differential operations of the neural networks. The main advantages of BINet are summarized below:

	First, BINet adopts an explicit integral representation of the solution such that the output of BINet satisfies the original PDE automatically. This means that the training of BINet is naturally confined in the solution space of PDE. Since BINet is defined in a much smaller space, i.e., the solution function space, the training of BINet is faster and more stable than the general neural network. Another advantage of integral representation is that all differential operators are removed in BINet. Then the regularity requirement of BINet is relaxed significantly which enables BINet to approximate the solutions with poor regularity. Moreover, BINet has good theoretical properties. Using neural tangent kernel (NTK) techniques \cite{jacot2018neural}, BINet can be proved to converge as the width of the neural network goes to infinity.

	Second, since the PDE has been satisfied automatically with the integral representation in BINet, the residual of the boundary condition is the only component of the loss function. There is no need to balance the residual of the PDE and the boundary condition, BINet thus fits the boundary condition better with less parameter tuning.

	Third, BINet can solve PDEs in the unbounded domain since the integral representation holds for both bounded and unbounded domains. For some problems such as electromagnetic wave propagation, solving PDEs in an unbounded domain is critical and complicated using traditional methods. Moreover, existing deep-learning-based models also suffer from the difficulty of sampling in unbounded domains. Therefore, BINet provides a good choice to solve this kind of problem.

	Fourth, BINet is also capable to learn a solution operator mapping a parameterized PDE to its solution by feeding the parameters to the network as input. Note that in the integral representation of the solution, the integral kernel, i.e., the fundamental solution to the original PDE, has an explicit form dependent on the differential operator of the PDE. Moreover, the integral is conducted exactly on the boundary of the domain on which the PDE is solved. Therefore, BINet has great advantages in learning the solution operator mapping differential operator or computational domain to the corresponding solution.
	
	At last, the boundary integral is defined on the boundary whose dimension is less by 1 than the original computational domain. Lower dimension leads to fewer sample points which will reduce the computational cost.
	
	The rest of this paper is organized as follows.  An overview of related work on solving PDEs using deep learning approaches is given in Section \ref{sec:rel}. The boundary integral method and BINet are introduced in Section \ref{sec:BINet}. In Section \ref{sec:ntk}, we analyze the convergence of BINet using the NTK techniques. Extensive numerical experiments are shown in Section \ref{sec:num}. At last, conclusion remarks are made in Section \ref{sec:con}.

	\section{Related Work}\label{sec:rel}
	Solving PDEs with neural network can be traced back to 1990s \cite{dissanayake1994neural, lagaris1998artificial,lagaris2000neural}.
	Together with the deep learning revolution, solving PDEs with neural networks also enter a period of prosperity. In a neural network-based PDE solver, the loss function and network structure are two key ingredients. 
	
	Regarding the loss function,  
	one natural choice is the residual of PDE. In \cite{raissi2019physics, sirignano2018dgm}, $L_2$ norm of the residual is used as the loss function. For elliptic equations, the variation form provides another choice of the loss function. Yu and E proposed to use Ritz variational form as the loss function in \cite{weinan2018deep} and Galerkin variational form was formulated as an adversarial problem in \cite{zang2020weak}. In \cite{cai2020deep, lyu2020mim}, to avoid high order derivatives in the loss function, high order PDEs are first transformed to first-order PDEs system by introducing auxiliary variables. For the first-order system, we only need to compute first-order derivatives in the loss function.
	To solve PDEs, boundary condition has to be imposed properly. One simple way to enforce the boundary condition is to add it to the loss function as a penalty term. 
	In this approach, we must tune a weight to balance the PDEs' residual and boundary conditions. Usually, this weight is crucial and subtle to get good results. 
	The other way is to impose the boundary condition explicitly by introducing a distance function of the boundary \cite{berg2018unified}. Regarding that network structure, there are also many works recently. A fully connected neural network (FCN) is one of the most frequently used networks. In \cite{weinan2018deep}, it is found out that residual neural network (ResNet) gives better results. For PDEs with multiscale structure, a multiscale neural network was designedspecifically by introducing multiscale structure in the network \cite{cai2019multi}. Activation function is another important part of neural networks. Choice of the activation function is closely related to the smoothness of the neural network. To compute high-order derivatives, smooth activation functions, sigmoid, tanh, etc., are often used in PDE solvers. ReLU activation which is used most often in machine learning is hardly used due to its poor regularity. For special PDEs, other activation functions are also used, such as sReLU \cite{cai2019multi}, sine function\cite{sitzmann2020implicit}. By constrast, our BINet adopts an explicit integral representation of the solution, therefore the output satisfies the original PDE automatically.

	Another related research is to learn the solution operator, i.e. map from the parameter space to the solution space. Both the parameter space and solution space may be infinitely dimensional. Therefore, learning solution operator is more challenging than solving a single PDE. Solution operators may be complicated also, and network architecture becomes more important. In \cite{gin2020deepgreen,li2020fourier}, while Green's function and Fourier transform are used respectively to design good network architecture, the purpose is not for solving PDEs and thus different from ours. The network solving the single PDE can also be generalized to learn solution operators \cite{cai2021deepm,gao2020phygeonet,lu2019deeponet}. In \cite{khoo2017solving,yang2018physics,yang2019adversarial}, a neural network is used to solve the PDEs with uncertainty.

	\section{Boundary Integral Network (BINet)}\label{sec:BINet}
	Let 
	$\Omega\subset\mathbb{R}^d$ be a bounded domain, $\overline{\Omega}$ be the closure of $\Omega$ and $\Omega^c=\mathbb{R}^d\backslash \overline{\Omega}$.
	We consider the PDE in the following form,
	\begin{equation}
		\begin{aligned}
			\mathcal{L}u(x) = 0.
			\label{main_eq}
		\end{aligned}
	\end{equation}
	In this paper, $\mathcal{L}$ is chosen to be Laplace operator $ -\Delta=-\sum_{i=1}^d\frac{\partial^2}{\partial x_i^2}$ or Helmholtz operator $-\Delta-k^2$. But in general, BINet can be applied as long as the fundamental solution of $\mathcal{L}$ in $\mathbb{R}^d$ can be obtained. We list more options of $\mathcal{L}$ in Appendix.
	We consider both interior problems and exterior problems. In interior and exterior problems, the PDE $\mathcal{L}u(x) = 0$ is defined in $\Omega$ and $\Omega^c$ respectively.
	
	In this paper, we consider the Dirichlet type of boundary condition $ u|_{\partial\Omega}=g(x).$ Other types of boundary conditions can be easily handled in BINet with a small modification of the boundary integral equation.

	%\label{DeepGreen}
	\subsection{Potential Theory}
	In this subsection, we briefly introduce the basics of the potential theory which provides the theoretical foundation of BINet. 
	We recall an important theorem in potential theory \cite{kellogg1953foundations}.
	\begin{theorem}\label{thm.potential}
		For any continuous function $h$ defined on $\partial\Omega$, the single layer potential is defined as
		\begin{equation}
			\mathcal{S}[h](x):=-\int_{\partial\Omega} G(x,y) h(y) d s_{\boldsymbol{y}}, \label{Single}  
		\end{equation}
		and the double layer potential is defined as
		\begin{equation}
			\mathcal{D}[h](x):=-\int_{\partial\Omega} \frac{\partial G(x,y) }{\partial \boldsymbol{n}_{\boldsymbol{y}}} h(y) d s_{\boldsymbol{y}}.
			\label{Double}
		\end{equation}
		with $\boldsymbol{n}_{\boldsymbol{y}}$ denotes out normal of $\partial \Omega$ at $\boldsymbol{y}$, $G(x,y)$ is the fundamental solution of equation \eqref{main_eq}. Then, both single layer potential and double layer potential satisfy (\ref{main_eq}). 
		And for all $x_0\in\partial\Omega$, we have
		\begin{equation}\label{thm:Property}
			\begin{aligned}
				\lim_{x\to x_0}\mathcal{S}[h](x) & = \mathcal{S}[h](x_0),\\
				\lim_{x\to x_0^\pm}\mathcal{D}[h](x) & = \mathcal{D}[h](x_0)\mp\frac{1}{2}h(x),
			\end{aligned}
		\end{equation}
		where $x\to x_0^-$ and $x\to x_0^+$ mean converging in $\Omega$ and $\Omega^c$ respectively.
	\end{theorem}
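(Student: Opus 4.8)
The plan is to separate the two assertions: that both potentials satisfy $\mathcal{L}u=0$ off the boundary, which is a routine differentiation-under-the-integral argument, and the boundary behaviour in \eqref{thm:Property}, which is the analytic core. For the first part I would fix $x\notin\partial\Omega$; then the distance from $x$ to any $y\in\partial\Omega$ is bounded below, so $G(x,y)$, $\partial G(x,y)/\partial\boldsymbol{n}_{\boldsymbol{y}}$, and all their $x$-derivatives are smooth near $x$, justifying differentiation under the integral sign to obtain
\begin{equation}
\mathcal{L}_x\mathcal{S}[h](x)=-\int_{\partial\Omega}\big(\mathcal{L}_xG(x,y)\big)h(y)\,ds_{\boldsymbol{y}},
\end{equation}
and similarly for $\mathcal{D}$ after commuting $\mathcal{L}_x$ with $\partial/\partial\boldsymbol{n}_{\boldsymbol{y}}$. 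Since $G$ is the fundamental solution, $\mathcal{L}_xG(x,y)=0$ for $x\neq y$, so both integrands vanish and the claim follows.

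For the continuity of the single layer potential across $\partial\Omega$, the essential point is that $G(x,y)$ is only weakly (integrably) singular as $x\to y$ along the $(d-1)$-dimensional surface $\partial\Omega$. I would bound $|G(x,y)-G(x_0,y)|$ by an integrable majorant, uniform for $x$ in a neighbourhood of $x_0$, so that the limit passes inside the integral and no jump appears, giving $\lim_{x\to x_0}\mathcal{S}[h](x)=\mathcal{S}[h](x_0)$.

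The hard part is the jump relation for the double layer potential, since the kernel $\partial G/\partial\boldsymbol{n}_{\boldsymbol{y}}$ fails to be integrable in the naive sense and the limit depends on the side of approach. My plan is the classical two-step decomposition. First, I settle the constant density $h\equiv1$: by the Gauss flux identity (and its Helmholtz counterpart) the quantity $\int_{\partial\Omega}\partial G(x,y)/\partial\boldsymbol{n}_{\boldsymbol{y}}\,ds_{\boldsymbol{y}}$ equals one constant for $x\in\Omega$, a different constant for $x\in\Omega^c$, and their average on $\partial\Omega$, the mismatch being exactly $\pm\tfrac12$; substituting into $\mathcal{D}[1]=-\int_{\partial\Omega}\partial G/\partial\boldsymbol{n}_{\boldsymbol{y}}\,ds_{\boldsymbol{y}}$ reproduces the $\mp\tfrac12$ jump for the unit density. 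Then, for general continuous $h$, I write $h(y)=\big(h(y)-h(x_0)\big)+h(x_0)$ and split
\begin{equation}
\mathcal{D}[h]=\mathcal{D}\big[h-h(x_0)\big]+h(x_0)\,\mathcal{D}[1].
\end{equation}
The second term carries the jump through the constant-density result, while the first is continuous across $x_0$ because the factor $h(y)-h(x_0)\to0$ as $y\to x_0$ tames the leading singularity of the kernel, leaving a weakly singular integral amenable to the same majorant argument as for $\mathcal{S}$. Adding the two contributions yields the stated $\mp\tfrac12 h(x_0)$ jump. The main technical obstacle is the uniform estimate that makes the subtracted term $\mathcal{D}[h-h(x_0)]$ continuous up to the boundary; this is where regularity of $\partial\Omega$ (e.g.\ a $C^2$ or Lyapunov surface) enters, ensuring the local geometry controls $\partial G/\partial\boldsymbol{n}_{\boldsymbol{y}}$ near the diagonal.
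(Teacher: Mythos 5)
The paper does not prove this theorem at all --- it is recalled as a classical result with a citation to Kellogg's \emph{Foundations of Potential Theory} --- and your sketch is precisely the standard argument from that literature: differentiation under the integral away from $\partial\Omega$, a weakly-singular majorant for the continuity of $\mathcal{S}$, and the Gauss flux identity plus the splitting $h=(h-h(x_0))+h(x_0)$ for the double-layer jump. So your proposal is correct and takes the same (classical) route the paper implicitly relies on. One small caveat: for the Helmholtz kernel the flux integral $\int_{\partial\Omega}\partial G(x,y)/\partial\boldsymbol{n}_{\boldsymbol{y}}\,ds_{\boldsymbol{y}}$ is not literally constant on each side of $\partial\Omega$ (it differs from the Laplace case by $-k^2\int_\Omega G(x,y)\,dy$, a volume potential); since that extra term is continuous across the boundary, the $\mp\tfrac12$ jump survives, but your step should be phrased in terms of the one-sided limits rather than two constants.
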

	For many important PDEs, fundamental solutions can be written explicitly. For the Laplace equation $-\Delta u(x)=0$ in $\mathbb{R}^2$, the fundamental solution is $G(x,y)=-\frac{1}{2\pi}\text{ln}|x-y|$, while the fundamental solution for the Helmholtz equation $-\Delta u(x)-k^2u(x)=0$ in $\mathbb{R}^2$ is $G(x,y)=\frac{i}{4}H_0^1(k|x-y|)$ where $H^1_0$ is the Hankel function. For the Laplace equation and the Helmholtz equation in the high dimensional case and more equations, please refer to Appendix.

	Based on Theorem \ref{thm.potential}, the single/double layer potential \eqref{Single} \eqref{Double} give explicit integral representations for the solution of the PDE. Using these integral representations, we can construct a network such that the output of the network solves the PDE automatically even with random initialization. This is also the main observation in BINet.

	\subsection{The Structure of BINet}
	In this subsection, we will explain how to use the boundary integral form $\mathcal{S}[h](x, \theta)$ or $\mathcal{D}[h](x, \theta)$ to construct the structure of BINet. %Specifically, four steps are required to construct the network and finally obtain the solution.
	As shown in Fig. \ref{fig.sd}, BINet consists of three components: \emph{input, approximation, integration}.
	\begin{itemize}%[leftmargin=*]
		\item From the integral formula of the single/double layer potential, it is clear that BINet has three inputs: point in the computational domain $x\in \Omega$, differential operator $\mathcal{L}_\alpha$,  and domain boundary $\partial \Omega_{\beta}$. Differential operator$\mathcal{L}_\alpha$ determines the fundamental solution $G$ and domain boundary $\partial \Omega_{\beta}$ gives the domain of the integral.    
		\item In the single/double layer potential, only a density function $h$ is unknown. In BINet, the density function $h$ is approximated using a multilayer perceptron (MLP) (or a residual network, a.k.a, ResNet) denoted as $h(y,\theta)$ with the learning parameter $\theta$. Note that $h$ is defined on the boundary only. 
		\item Compute single or double layer potential in Theorem \ref{thm.potential} by kernel integration of the density function on the boundary $\partial\Omega_{\beta}$ where the kernel is given by the explicit fundamental solution $G$. The integration can be done numerically by the methods shown in \cite{alpert1999hybrid, kapur1997high}.
	\end{itemize}
	
	\begin{figure}[t]
		\centering
		\includegraphics[width=0.9\textwidth]{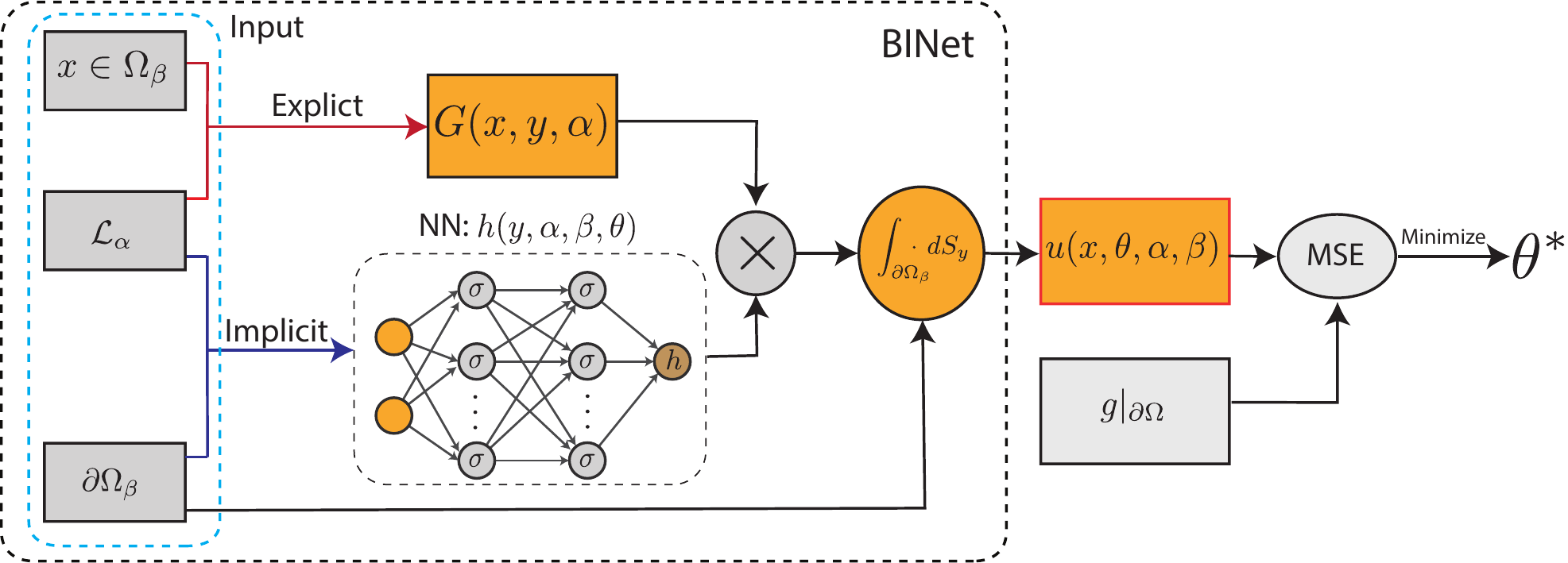}
		\caption{In BINet, fundamental solution $G(x,y,\alpha)$ explicitly depends on the operator $\mathcal{L}_\alpha$ of the equation, while the density function $h(y,\alpha,\beta,\theta)$ is implicitly dependent on $\mathcal{L}_\alpha$ and the boundary $\partial\Omega_\beta$. This implicit dependence is approximated by a neural network whose input is $\mathcal{L}_\alpha$ and $\partial \Omega_\beta$. These two parts are multiplied together and integrated on the boundary $\partial\Omega_\beta$, giving the output of BINet. The boundary condition is taken as the supervisory signal for the loss.
		}
		
		\label{fig.sd}
	\end{figure}    
	
	To train BINet, the loss function is given by \eqref{thm:Property} in Theorem \ref{thm.potential}.
	\begin{equation}\label{eq.loss}
		L(\theta) = \begin{cases}
			\|\mathcal{S}[h(\ \cdot \ ;\theta)](x)-g(x)\|^2_{\partial\Omega}, & \text{ single layer potential}\\[0.5em]
			\|(\frac{1}{2}\mathcal{I}+\mathcal{D})[h(\ \cdot \ ;\theta)](x)-g(x)\|^2_{\partial\Omega}, &\text{ double layer potential (Interior problem)}\\[0.5em]
			\|(-\frac{1}{2}\mathcal{I}+\mathcal{D})[h(\ \cdot \ ;\theta)](x)-g(x)\|^2_{\partial\Omega}, &\text{ double layer potential (Exterior problem)}
		\end{cases}
	\end{equation}
	where $\mathcal{S}$ and $\mathcal{D}$ are the potential operators defined in Theorem \ref{thm.potential}, and $\mathcal{I}$ is the identity operator.

	In BINet, the differential operator $\mathcal{L}_\alpha$ and the computational domain boundary $\partial \Omega_\beta$ are naturally incorporated, which means that BINet has the capability to learn the map from the differential operator and computational domain to solutions.

	\section{Convergence Analysis of BINet}
	\label{sec:ntk}
	
	In recent years, many efforts have been devoted to the development of the convergence theory for the over-parameterized neural networks. In \cite{jacot2018neural}, a neural tangent kernel (NTK) is proposed to prove the convergence, and this tangent kernel is also implicit in these works \cite{du2019gradient, du2018gradient, li2018learning}. Later, a non-asymptotic proof using NTK is given in \cite{arora2019exact}. It is shown that a sufficiently wide network that has been fully trained is indeed equivalent to a kernel regression predictor. In this work, we give a non-asymptotic proof of the convergence for our BINet.
	
	In BINet, the density function in the boundary integral form is approximated by a neural network as $h(y,\theta)$. And a boundary integral operator is performed on the density function, giving the output of BINet on the boundary as $v(x) =\mathcal{A}[h](x,\theta), x\in\partial\Omega$. Here $\mathcal{A}=\mathcal{S}$ for the single layer potential and $\mathcal{A}=\pm \mathcal{I}/2 +\mathcal{D}$ for the double layer potential of the interior problem or the exterior problem. For simplicity, we denote $\mathcal{A}[f](x) = \int_{\partial\Omega}\tilde{G}(x,y)f(y)dy, x\in\partial\Omega$ as the output of BINet limited on the boundary. And the loss is given by the difference between the output and the boundary values, see Section \ref{sec:BINet} for detail. Due to the operator $\mathcal{A}$, the convergence analysis of this structure is non-trivial. 
	
	In the learning process, the evolution of the difference between the output and the boundary value obeys the following ordinary differential equation
	\begin{equation}\label{eq.converge}
		\begin{split}
			\frac{d}{dt}(v(x,\theta(t))-\tilde{v}(x)) & = % -\langle v(x',\theta(t))-\tilde{v}(x'), \mathcal{N}_t(x,x') \rangle,\\
			-\int_{\partial\Omega} (v(x',\theta(t))-\tilde{v}(x')) \mathcal{N}_t(x,x')dx'
		\end{split}
	\end{equation}
	where $v(x,\theta) = \mathcal{A}[h](x,\theta)$ is the output of BINet limited on the boundary and $\tilde{v}(x)$ is the boundary value, i.e., the label function. For a detailed derivation of \eqref{eq.converge}, see Appindex.
	Here $\mathcal{N}_t(x,x')=\sum_{\theta_p}\mathcal{A}[\frac{\partial h}{\partial \theta_p(t)}](x)\mathcal{A}[\frac{\partial h}{\partial \theta_p(t)}](x')$ is the kernel at training-step index $t$, with an admissible operator $\mathcal{A}$, see Appendix for detail.
	
	In the following two theorems, we would show that the kernel in \eqref{eq.converge} converges to a constant kernel independent of $t$ when the width of the layers goes to infinity. And the proof sketch is listed in the Appendix based on the works in \cite{arora2019exact}.
	
	\begin{theorem}\text{(Convergence result of kernel at initialization)}\
		Fix $\epsilon>0$ and $\delta\in(0,1)$. Suppose the activation nonlinear function $\sigma(\cdot) = \max\{\cdot,0\}$ is ReLU, the minimum width of the hidden layer $\text{min}_{l\in[L]}d_l\geq \Omega(\frac{L^6}{\epsilon^4}\log(L/\delta))$, and the operator $\mathcal{A}$ is bounded with $\|\mathcal{A}\|_\infty\leq A$. Then for the normalized data $x$ and $x'$ where $\|x\|\leq1$ and $\|x'\|\leq1$, with probability at least $1-\delta$
		we have $$|\mathcal{N}(x,x')-[\mathcal{A}\Theta^{(L)}\mathcal{A}](x,x')|\leq (L+1)A^2\epsilon.$$
		Here $[\mathcal{A}\Theta\mathcal{A}](x,x')$ is the constant kernel of BINet given by the neural-network kernel $\Theta(y,y')$. The front and the back operator means the operations are performed with the respect to the first and the second variable of the neural-network kernel. 
		\label{th.c1}
	\end{theorem}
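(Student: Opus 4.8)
The plan is to reduce the statement to the standard fully-connected NTK concentration result of Arora et al. \cite{arora2019exact} by exploiting the linearity and $L^\infty$-boundedness of the integral operator $\mathcal{A}$. The starting point is the defining formula $\mathcal{N}(x,x')=\sum_{\theta_p}\mathcal{A}[\partial h/\partial\theta_p](x)\,\mathcal{A}[\partial h/\partial\theta_p](x')$ evaluated at initialization, together with the representation $\mathcal{A}[f](x)=\int_{\partial\Omega}\tilde G(x,y)f(y)\,dy$ introduced above.

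First I would push the operator inside each factor: by linearity $\mathcal{A}[\partial h/\partial\theta_p](x)=\int_{\partial\Omega}\tilde G(x,y)\,\partial_{\theta_p}h(y)\,dy$. Substituting both factors and interchanging the finite parameter sum with the two integrals (immediate, since the sum is finite) gives
\begin{equation*}
\mathcal{N}(x,x')=\int_{\partial\Omega}\int_{\partial\Omega}\tilde G(x,y)\,\tilde G(x',y')\,\widehat\Theta^{(L)}(y,y')\,dy\,dy',
\end{equation*}
where $\widehat\Theta^{(L)}(y,y'):=\sum_{\theta_p}\partial_{\theta_p}h(y)\,\partial_{\theta_p}h(y')$ is exactly the empirical NTK of the underlying density network $h$. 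This identifies $\mathcal{N}=[\mathcal{A}\widehat\Theta^{(L)}\mathcal{A}]$, i.e. BINet's kernel is the network's empirical NTK sandwiched between two copies of $\mathcal{A}$ acting on the first and second arguments respectively, which is precisely the sandwiching that defines the target $[\mathcal{A}\Theta^{(L)}\mathcal{A}]$.

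Subtracting, the difference becomes $[\mathcal{A}(\widehat\Theta^{(L)}-\Theta^{(L)})\mathcal{A}](x,x')$, a double integral of $\widehat\Theta^{(L)}-\Theta^{(L)}$ against $\tilde G(x,\cdot)\,\tilde G(x',\cdot)$. I would then import the fully-connected NTK bound: under the width hypothesis $\min_{l}d_l\geq\Omega(L^6\epsilon^{-4}\log(L/\delta))$, with probability at least $1-\delta$ one has $|\widehat\Theta^{(L)}(y,y')-\Theta^{(L)}(y,y')|\leq (L+1)\epsilon$ for normalized inputs. Taking absolute values inside the double integral, factoring out this bound, and using that for each fixed $x$ the kernel's $L^1$-mass satisfies $\int_{\partial\Omega}|\tilde G(x,y)|\,dy\leq\|\mathcal{A}\|_\infty\leq A$ (the $L^\infty$ operator norm of an integral operator is the supremum over $x$ of this mass), each of the two integrations contributes a factor $A$. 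This yields $|\mathcal{N}(x,x')-[\mathcal{A}\Theta^{(L)}\mathcal{A}](x,x')|\leq (L+1)A^2\epsilon$, as claimed.

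The main obstacle is the uniformity of the NTK bound over the integration variables. Arora et al.'s estimate is stated for a fixed pair of inputs, whereas here it must hold simultaneously for all $y,y'\in\partial\Omega$ so that $(L+1)\epsilon$ can be pulled out of the double integral. I would close this gap in one of two ways: either by discretizing the boundary integral with the quadrature actually used in BINet and applying a union bound over the finitely many nodes (the $\log(1/\delta)$ in the width requirement absorbs the inflated failure probability), or, for the continuum statement, by combining the pointwise bound on a fine $\epsilon$-net of $\partial\Omega$ with the Lipschitz continuity of both $\widehat\Theta^{(L)}$ and $\Theta^{(L)}$ in their inputs to upgrade the net estimate to a uniform one. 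Everything else — linearity, the finite-sum Fubini step, and the operator-norm bookkeeping — is routine.
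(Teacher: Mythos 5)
Your argument is essentially the paper's: the paper reduces Theorem 4.1 to the Arora et al.\ concentration bound for the MLP kernel $\mathcal{K}_0(y,y')$ (its Lemma adapted from Theorem 3.1 of that work) and absorbs the two applications of $\mathcal{A}$ via $\|\mathcal{A}\|_\infty\leq A$, which is exactly your sandwiching $|[\mathcal{A}(\widehat\Theta^{(L)}-\Theta^{(L)})\mathcal{A}](x,x')|\leq A^2(L+1)\epsilon$. You are in fact more careful than the paper on one point --- the pointwise NTK bound must hold uniformly over the integration variables $y,y'\in\partial\Omega$ before it can be pulled out of the double integral, and your union-bound or $\epsilon$-net fix addresses a gap that the paper's one-line ``directly obtained'' proof does not acknowledge.
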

	
	\begin{theorem}\text{(Convergence result of kernel during training)}
		Fix $\omega\leq \text{poly}(\frac{1}{L}, \frac{1}{n}, 1/\log(1/\delta), \lambda_0)$ and $\delta\in(0,1)$. Suppose that $\text{min}_l \{d_l\}\geq \text{poly}(1/\omega)$, and the operator $\mathcal{A}$ is bounded with $\|\mathcal{A}\|_\infty\leq A$. Then with probability at least $1-\delta$ over Gaussian random initialization, we have for all $t\geq 0$, $$|\mathcal{N}_t(x,x')-\mathcal{N}_0(x,x')|\leq A^2\omega,$$
		where $\mathcal{N}_t(x,x')$ is the kernel along time $t$ and $\mathcal{N}_0(x,x')\overset{\Delta}{=}\mathcal{N}(x,x')$ is the kernel when initialization over random Gaussian denoted in Theorem \ref{th.c1} to distinguish with the training process.
		\label{th.c2}
	\end{theorem}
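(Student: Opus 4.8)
The plan is to reduce the statement to the classical NTK stability result of \cite{arora2019exact} by exploiting the linearity and boundedness of $\mathcal{A}$, isolating all dependence on the operator into a clean factor $A^2$. The first step is the algebraic observation that the BINet kernel factors through the ordinary neural tangent kernel. Writing $\phi_p(t)=\partial h/\partial\theta_p(t)$ for the feature attached to the $p$-th parameter and letting $\Theta_t(y,y')=\sum_p\phi_p(t)(y)\,\phi_p(t)(y')$ denote the standard operator-free NTK on the boundary, one computes directly from the definition of $\mathcal{N}_t$ and from $\mathcal{A}[f](x)=\int_{\partial\Omega}\tilde G(x,y)f(y)\,dy$ that
\begin{equation}
\mathcal{N}_t(x,x')=\int_{\partial\Omega}\int_{\partial\Omega}\tilde G(x,y)\,\tilde G(x',y')\,\Theta_t(y,y')\,dy\,dy'=[\mathcal{A}\Theta_t\mathcal{A}](x,x').
\end{equation}
Consequently $\mathcal{N}_t-\mathcal{N}_0=\mathcal{A}(\Theta_t-\Theta_0)\mathcal{A}$, and since $\|\mathcal{A}\|_\infty\le A$ the two boundary integrations contribute at most a factor $A^2$. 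It therefore suffices to prove the uniform-in-time estimate $|\Theta_t(y,y')-\Theta_0(y,y')|\le\omega$ for the ordinary NTK, after which applying $\mathcal{A}$ on each side yields the claimed $A^2\omega$ bound.

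Next I would establish that the parameters remain close to their initialization throughout gradient flow, which is where the polynomial width requirement and the dependence on $\lambda_0$ enter. From the dynamics \eqref{eq.converge}, the residual $r(x,t):=v(x,\theta(t))-\tilde v(x)$ evolves under the operator-modified kernel $\mathcal{N}_t$; using Theorem \ref{th.c1} together with the lower bound $\lambda_{\min}(\mathcal{N}_0)\ge\lambda_0$, one shows that for widths $\ge\mathrm{poly}(1/\omega)$ the kernel stays close enough to $\mathcal{N}_0$ that $\|r(\cdot,t)\|$ decays at an essentially exponential rate $e^{-\lambda_0 t}$. Integrating the parameter velocity $\dot\theta_p=-\int_{\partial\Omega}r(x',t)\,\mathcal{A}[\phi_p](x')\,dx'$ over $t\in[0,\infty)$ and bounding its integrand by $A\,\|r(\cdot,t)\|\,\|\phi_p\|$ gives a total displacement that scales like $A\|r(\cdot,0)\|/\lambda_0$ divided by a growing power of the width, so every weight matrix stays inside a small ball around its initial value.

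The third step turns this weight control into kernel control. For a ReLU network the NTK features change little under small weight perturbations because, with high probability over initialization, only a vanishing fraction of neurons switch activation pattern; the standard perturbation argument of \cite{arora2019exact} then shows that a weight displacement of order $R$ changes $\Theta_t(y,y')$ by $O(R\cdot\mathrm{poly}(L))$ uniformly over normalized inputs $\|y\|,\|y'\|\le1$. Choosing the width large enough that this is at most $\omega$ yields $|\Theta_t-\Theta_0|\le\omega$ for all $t\ge0$ with probability at least $1-\delta$, and combining with the factorization of the first step completes the proof.

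I expect the main obstacle to be the second step, because the evolution \eqref{eq.converge} is governed by the time-dependent, operator-modified kernel $\mathcal{N}_t$ rather than the fixed ordinary NTK: the decay of the residual and the stability of the kernel have to be bootstrapped together, since one needs $\mathcal{N}_t$ to remain positive definite to guarantee loss decay, yet establishing that stability is precisely the goal. The resolution is the usual continuity argument over a maximal interval on which the weights have not yet escaped their ball, carried out here with every constant tracked through $\|\mathcal{A}\|_\infty\le A$ so that the operator does not degrade the effective smallest eigenvalue $\lambda_0$ below the threshold imposed by the width assumption.
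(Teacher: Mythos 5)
Your proposal follows essentially the same route as the paper: you reduce $|\mathcal{N}_t-\mathcal{N}_0|$ to $A^2\|\mathcal{K}_t-\mathcal{K}_0\|_\infty$ via the factorization of the BINet kernel through the ordinary NTK and the bound $\|\mathcal{A}\|_\infty\le A$ (the paper's ``reduction'' lemma), then invoke the weight-stability and kernel-perturbation machinery of \cite{arora2019exact}, with the only genuinely new work being the verification that the weight displacement bound $\|W^{(l)}(t)-W^{(l)}(0)\|_F=\mathcal{O}(A\sqrt{n}/\lambda_0)\le\omega\sqrt{m}$ survives the insertion of the bounded operator $\mathcal{A}$ into the gradient-flow integrand --- exactly the paper's adaptation of Lemma F.7. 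Your explicit remark about bootstrapping residual decay against kernel stability via a continuity argument is a correct reading of what the cited lemmas implicitly require, so the proposal is sound and matches the paper's proof.
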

	
	Further, we have the following lemma for the positive definiteness of the new constant kernel.
	\begin{lemma}\label{lemma.definite}
		$[\mathcal{A}\Theta^{(L)}\mathcal{A}](x,x')$ is positive definite for double layer potential in BINet. For single layer potential, the positive definiteness depends on the $C^\infty$ compactness of the boundary $\partial\Omega$.
	\end{lemma}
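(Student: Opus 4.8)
The plan is to reduce the positive definiteness of the limiting kernel $\mathcal{N}=[\mathcal{A}\Theta^{(L)}\mathcal{A}]$ to two separate facts: the positive definiteness of the underlying ReLU neural tangent kernel $\Theta^{(L)}$, furnished by the NTK analysis of \cite{arora2019exact}, and the injectivity of the boundary integral operator $\mathcal{A}$ on $L^{2}(\partial\Omega)$. First I would unfold the quadratic form attached to $\mathcal{N}$. Writing $\mathcal{A}[f](x)=\int_{\partial\Omega}\tilde{G}(x,y)f(y)\,ds_{y}$ and letting $\mathcal{A}^{*}$ denote its adjoint (the kernel $\tilde{G}(x,y)$ integrated in its first slot), a single interchange of the order of integration gives, for any $w\in L^{2}(\partial\Omega)$ --- the role played by the residual $v-\tilde{v}$ in \eqref{eq.converge} ---
\begin{equation}
\langle w,\ \mathcal{N}w\rangle_{\partial\Omega}=\big\langle \mathcal{A}^{*}w,\ \Theta^{(L)}\,\mathcal{A}^{*}w\big\rangle_{\partial\Omega}.
\end{equation}
Thus the whole question collapses to the scalar identity $\langle w,\mathcal{N}w\rangle=\langle\Phi,\Theta^{(L)}\Phi\rangle$ with $\Phi:=\mathcal{A}^{*}w$.

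Next I would invoke that $\Theta^{(L)}$ is a continuous, symmetric, strictly positive-definite kernel, so that by Mercer's theorem its integral operator has strictly positive spectrum and $\langle\Phi,\Theta^{(L)}\Phi\rangle>0$ for every $\Phi\neq0$. Combined with the displayed identity, this shows that $\langle w,\mathcal{N}w\rangle>0$ holds \emph{if and only if} $\Phi=\mathcal{A}^{*}w\neq0$. Hence the positive definiteness of $\mathcal{N}$ is \emph{equivalent} to the injectivity of $\mathcal{A}^{*}$ on $L^{2}(\partial\Omega)$, and the remaining work is purely a question of potential theory about the operator $\mathcal{A}$.

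I would then settle injectivity case by case. For the double layer potential $\mathcal{A}=\pm\tfrac12\mathcal{I}+\mathcal{D}$ the adjoint is $\mathcal{A}^{*}=\pm\tfrac12\mathcal{I}+\mathcal{D}^{*}$, a second-kind operator whose integral part $\mathcal{D}^{*}$ (the adjoint double-layer / Neumann--Poincar\'e operator) is compact on $L^{2}(\partial\Omega)$ when $\partial\Omega$ is smooth. By the classical solvability of the interior (resp. exterior) Dirichlet problem \cite{kellogg1953foundations}, $\pm\tfrac12\mathcal{I}+\mathcal{D}$ is invertible, and the Fredholm alternative transfers the trivial null space to its adjoint; therefore $\mathcal{A}^{*}$ is injective and $\mathcal{N}$ is positive definite \emph{unconditionally}. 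For the single layer potential $\mathcal{A}=\mathcal{S}$ the operator is (essentially) self-adjoint because the fundamental solution is symmetric, and injectivity is no longer automatic: on a $C^{\infty}$ compact boundary $\mathcal{S}$ is an elliptic pseudodifferential operator of order $-1$, hence Fredholm, and its kernel is trivial except in degenerate geometries (in $\mathbb{R}^{2}$, the logarithmic-capacity-one / degenerate-scale case). This is exactly the source of the conditional clause on the ``$C^{\infty}$ compactness of $\partial\Omega$'' in the statement.

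The hard part is step three, the injectivity of $\mathcal{A}^{*}$. The double-layer case is clean once one borrows the classical second-kind invertibility and pushes it to the adjoint via Fredholm theory, which needs only enough boundary regularity for $\mathcal{D}^{*}$ to be compact. The genuinely delicate piece is the single layer: one must pin down the precise regularity-and-geometry hypotheses (smoothness, compactness, and exclusion of the degenerate scale) guaranteeing $\ker\mathcal{S}=\{0\}$, and it is there that the boundary assumption becomes unavoidable. A minor technical point, which I expect to be harmless, is to check that $\Phi=\mathcal{A}^{*}w$ lands in the class on which the strict positive definiteness of $\Theta^{(L)}$ is available; since $\mathcal{S}:L^{2}\to H^{1}$ and $\pm\tfrac12\mathcal{I}+\mathcal{D}^{*}:L^{2}\to L^{2}$ are bounded, $\Phi\in L^{2}(\partial\Omega)$ and no difficulty arises.
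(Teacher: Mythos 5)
Your proposal is correct and follows essentially the same route as the paper's proof: exchange the order of integration to rewrite the quadratic form of $[\mathcal{A}\Theta^{(L)}\mathcal{A}]$ as the quadratic form of $\Theta^{(L)}$ evaluated at $l=\mathcal{A}^{*}w$, invoke the strict positive definiteness of the NTK $\Theta^{(L)}$ from \cite{jacot2018neural,arora2019exact}, and reduce everything to the invertibility of the boundary integral operator (\cite{verchota1984layer} for the double layer, \cite{gao1991layer} under $C^{\infty}$ compactness for the single layer). Your treatment is in fact slightly more careful than the paper's, since you note explicitly that the object requiring injectivity is the adjoint $\mathcal{A}^{*}$ (handled via the Fredholm alternative, or equivalently because the adjoint of an invertible bounded operator is invertible) and you flag the two-dimensional degenerate-scale caveat for the single layer, which the paper leaves implicit.
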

	The proof of Lemma \ref{lemma.definite} is given in Appendix. And the invertibility of the operator $\mathcal{A}$ is utilized to complete the proof. \cite{gao1991layer, verchota1984layer}
	
	By Lemma \ref{lemma.definite}, equation \eqref{eq.converge}, Theorem \ref{th.c1} and \ref{th.c2}, the error in BINet thus vanishes for double layer potential after fully training ($t\to\infty$) under the assumption that the the width of the neural network goes to infinity. And for single layer potential, the convergence results depend on the boundary, i.e., $\partial\Omega$ is $C^\infty$ compact. The proof of the convergence results is in the real space, however with the complex form of the kernel $[\mathcal{A}^*\Theta^{(L)}\mathcal{A}](x,x')$ with $\mathcal{A}^*[f](x) = \int_{\partial\Omega}\tilde{G}^*(x,y)f(y)dy$ for Helmholtz equations, the results still hold with inner product defined in the complex space.

	\section{Experiments}
	\label{sec:num}
	
	We use BINet to compute a series of examples including solving a single PDE, where differential operator and domain geometry are fixed, and learning solution operators. PDEs defined on both bounded and unbounded domains will be considered. In order to estimate the accuracy of the numerical solution $u$, the relative $L^2$ error $\Vert u-u^{\ast}\Vert_2/\Vert u^{\ast}\Vert_2$ is used, where $u^{\ast}$ is the exact solution. We compare our method with two state-of-the-art methods, the Deep Ritz method and PINN only for interior problems, since as we claimed before, other deep-learning-based PDE solvers are not able to handle exterior problems.
	
	%\subsection{Implementation details}

	%In the BINet, it is important to calculate the integral on the boundary,
	%\begin{equation}
	%I(x) = \int_{\partial\Omega} \Phi(x-y) h(y) d s_{\boldsymbol{y}}, \label{BaseInt}  
	%\end{equation}
	%where $\Phi(x)$ is a function that have singularity at 0. %In the two-dimensional case, we can assume the boundary $\partial\Omega$ has a parametric representation $\gamma(t) = (x_1(t),x_2(t)),t\in[0,2\pi]$. Then we can write (\ref{BaseInt}) as follows
	%\begin{equation}
	%I(x) = \int_{0}^{2\pi} \Phi(x-\gamma(t)) h(\gamma(t)) |\gamma'(t)| d t, %\label{BaseInt2}  
	%\end{equation}
	%Because of the singularity of the integral, we can't use the general discrete method to calculate the integral like Simpson's rule or Gauss quadrature. There are some other methods for this special case. In this work, we choose Kapur Rokhlin Quadrature\cite{kapur1997high} and Albert Quadrature\cite{alpert1999hybrid} for different cases. 
	
	In BINet, the fully connected neural network (MLP) or residual neural network (ResNet) are used to approximate the density function. Since there is no regularity requirement on density function, we can use any activation functions including ReLU. For the Laplace equation, the network only has one output, i.e., the approximation of density $h$, while for the Helmholtz equation, because its solution is complex, the network has two outputs, i.e., the real part and the imaginary part of density $h$. 
	In the experiments, we choose the Adam optimizer to minimize the loss function and all experiments are run on a single GPU of GeForce RTX 2080 Ti.

	\subsection{Experimental Results on Solving One Single PDE}
	\textbf{Laplace Equation with Smooth Boundary Condition.} 
	First, we consider a Laplacian equation in the bounded domain,
	\begin{equation}
		\begin{aligned}
			-\Delta u(x,y) = 0, \ (x,y)\in\Omega,\\
			u(x,y) = e^{ax}\sin(ay), \ (x,y)\in\partial\Omega,
		\end{aligned}
		\label{Ex1_eq}
	\end{equation}
	where $a$ is a fixed constant. We will compare the results of PINN, Deep-Ritz method, and BINet for different $a$. For simplicity, we choose $\Omega=[-1,1]\times[-1,1]$. In this example, we will use a residual neural network introduced in \cite{weinan2018deep}. We follow \cite{weinan2018deep} to choose $\text{ReLU}^3$ as the activation function in Deep Ritz method and PINN. In BINet, we use ReLU as the activation function since BINet has less regularity requirement. 
	
	When $a=4$, for these three methods, we all selected 800 equidistant sample points on $\partial\Omega$, and for PINN and Deep-Ritz method, we randomly selected 1600 sample points in $\Omega$. We all use residual neural networks with 40 neurons per layer and six blocks.
	
	When $a=8$, for the BINet method, we selected 2000 equidistant sample points on the boundary. For PINN and Deep-Ritz method, we randomly selected 4000 sample points in $\Omega$ and randomly selected 800 sample points on $\partial\Omega$. We also use residual neural networks with 100 neurons per layer and six blocks. But if we look at the solutions on $[-0.1,0.1]\times[-1,1]$, we find that the solutions of PINN and Deep Ritz method are quite different from the exact solution, but BINet method still captures the subtle structure of the exact solution. The results of different methods including PINN, Deep-Ritz method and BINet for $a=8$ are shown in Figure \ref{Ex1}.
	
	\begin{table}[H]
		\caption{Relative $L^2$ error of equation \eqref{Ex1_eq} with different methods.}
		\label{ex1_table}
		\centering
		\begin{tabular}{cccc}
			\toprule
			%\multicolumn{2}{c}{Part}           %        \\
			%\cmidrule(r){1-2}
			& PINN     & Deep Ritz     & {BINet} \\
			\midrule
			$a=4$ & 0.0140 & 0.0952  & \textbf{0.0031}     \\
			$a=8$ & 0.0262 & 0.2194 & \textbf{0.0002}      \\
			
			\bottomrule
		\end{tabular}
	\end{table}

	After training for 20000 epochs, the relative $L^2$ error of these methods is shown in the table \ref{ex1_table}.  
	%After training 20000 epochs, the relative $L^2$ error of these methods is shown in Table 1.  
	In this example, with the same number of layers and neurons, BINet is always better than the other two methods no matter what the value of $a$ is. When $a$ increases, unlike other methods, the result of the BINet does not get worse.
	\noindent
	\begin{figure}[t]
		\centering
		\includegraphics[width=0.9\textwidth]{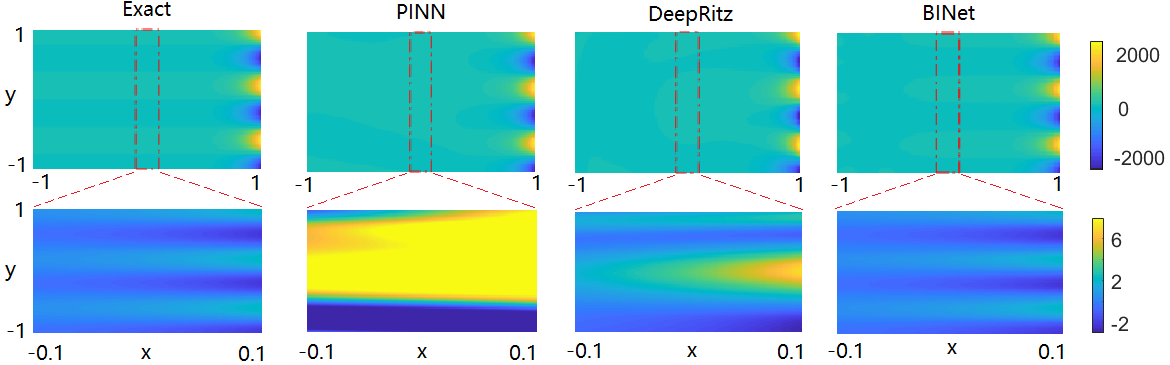}
		\caption{The solutions of Laplace equation \eqref{Ex1_eq} with smooth boundary conditions for $a=8$ by PINN, Deep-Ritz method and BINet. The first row shows the exact solutions and the numerical solutions obtained by the three methods in $\Omega=[-1,1]\times[-1,1]$. The second row is the zoom-in of the above figures in the subdomain $[-0.1,0,1]\times[-1,1]$. Only BINet captures the subtle structure of the exact solution successfully.}
		\label{Ex1}
	\end{figure}

	\noindent \textbf{Laplace Equation with Non-smooth Boundary Condition.} % \jun{need to have caption for figure 2 and also tighten the spaces between figures above and resize them if permitted. too small???} \jun{same case for the remaining plots (captions and also reduce the space and resize the pictures/plots???)}\jun{also put figures along the top margin in a page???}
	Next, let's consider a Laplace equation with a nonsmooth boundary condition. We also assume the domain $\Omega=[-1,1]\times[-1,1]$ and the boundary value problem is
	\begin{equation}
		\begin{aligned}
			-\Delta u(x,y)=0,\ (x,y)\in\Omega,\\
			u(x,y) = 2-|x|-|y|, \ (x,y)\in\partial\Omega.
		\end{aligned}
		\label{Ex2_eq}
	\end{equation}
	In problem \eqref{Ex2_eq}, the boundary condition is not smooth. In this example, we also used the ResNet with six blocks and 40 neurons per layer for three methods. We selected equidistant 800 sample points on $\partial\Omega$ for three methods, and for PINN and Deep-Ritz method, we randomly selected 1000 sample points in $\Omega$. Figure \ref{f2} shows the results of different methods. In this example, we take the result of the finite difference method with high precision mesh as the exact solution.  
	
	\begin{figure}[t]
		\centering
		%\begin{minipage}[t]{0.25\linewidth}
		\centering
		\includegraphics[width=0.9\textwidth]{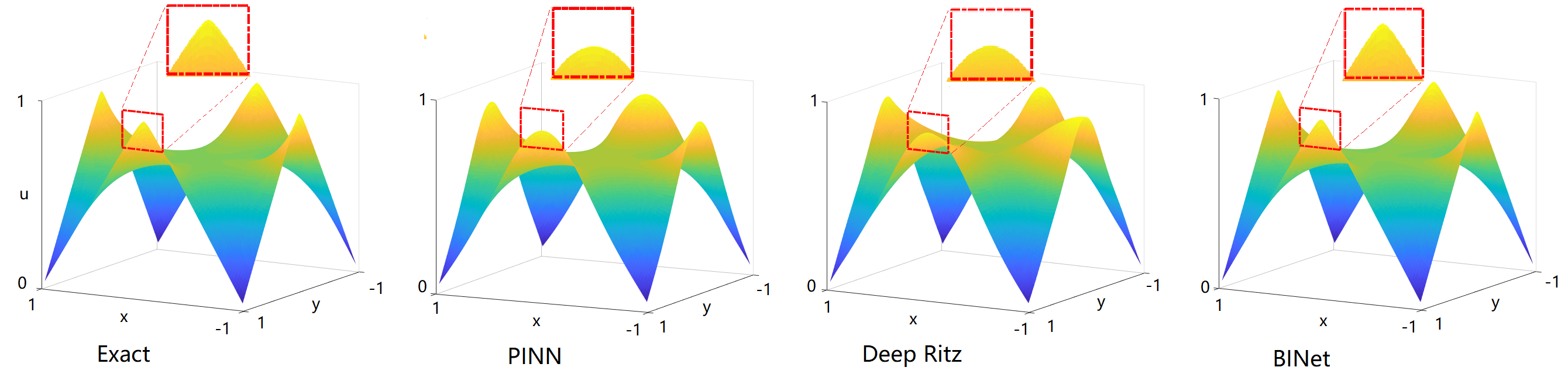}
		%\caption{fig1}
		%\end{minipage}%
		\centering
		\caption{The solutions of Laplace equation \eqref{Ex2_eq} with non-smooth boundary conditions by high-precision finite difference as the ground truth, PINN, deep Ritz and BINet. The red box is the zoom-in of the vicinity of the non-smooth point (0,1) in each figure with the same scale. Only BINet learns the singularity on the boundary successfully.}
		\label{f2}
	\end{figure}

	From Figure \ref{f2}, we can find that for PINN and Deep Ritz methods, the solutions on the boundary are smooth, which are different from the boundary condition. However, the boundary condition is well approximated by the solution of the BINet method. The reason is that, to satisfy the interior smoothness of the solution, the neural network of the PINN and Deep Ritz methods have to be a smooth function. So the solutions are still smooth even if they are close enough to the unsmooth boundary points.

	\noindent \textbf{Helmholtz Equation with Different Wavenumbers.}
	In this experiment, we consider an interior Helmholtz equation
	\begin{equation}
		\begin{aligned}
			-\Delta u(x,y) - k^2u(x,y) = 0,\ (x,y)\in\Omega,\\
			u(x,y) = e^{i(k_1x+k_2y)}, \ (x,y)\in\partial\Omega,
		\end{aligned}
		\label{eq:Ex3}
	\end{equation}
	where $(k_1,k_2)=(k\cos \frac{\pi}{7},k\sin\frac{\pi}{7})$, and $\partial\Omega=\{(\frac{9}{20}\cos(t)-\frac{\cos(5t)}{9}\cos(t),\frac{9}{20}\sin(t)-\frac{\cos(5t)}{9}\sin(t))|t\in[0,2\pi]\}$. The Deep-Ritz method can not solve the Helmholtz equation. Hence, we will compare the BINet method and PINN method for different $k$. We choose a fully connected neural network with 4 hidden layers with Sigmoid activation function and 40 neurons per layer.  and we choose 800 points on the boundary for BINet and PINN. In addition, we also randomly selected 2400 sample points in $\Omega$. For k = 1 and 4, we use the PINN type method and BINet method to solve the equation respectively. The loss function and results are shown in Figure \ref{f:Ex3}. We can see the loss function of BINet descends faster, and for $k=4$, the loss of the PINN method does not converge. In contrast, the loss of the BINet is always convergent no matter the value of $k$ is. The second and the third figures also show the result of BINet is much better than PINN. 
	\begin{figure}[t]
		\centering
		\includegraphics[width=0.3\textwidth]{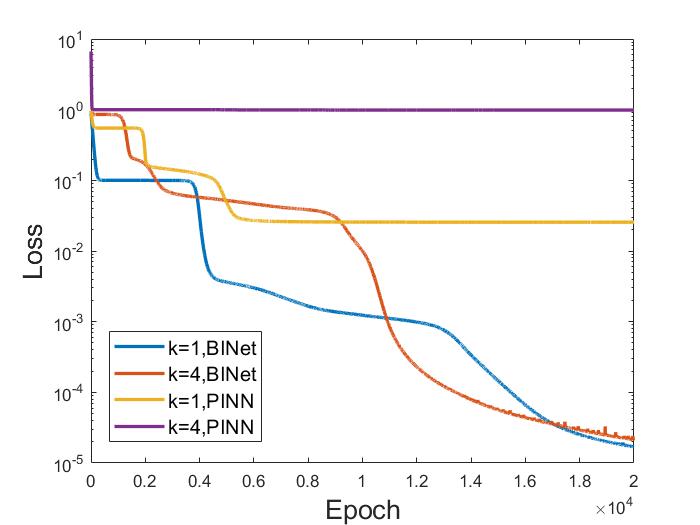}
		\includegraphics[width=0.3\textwidth]{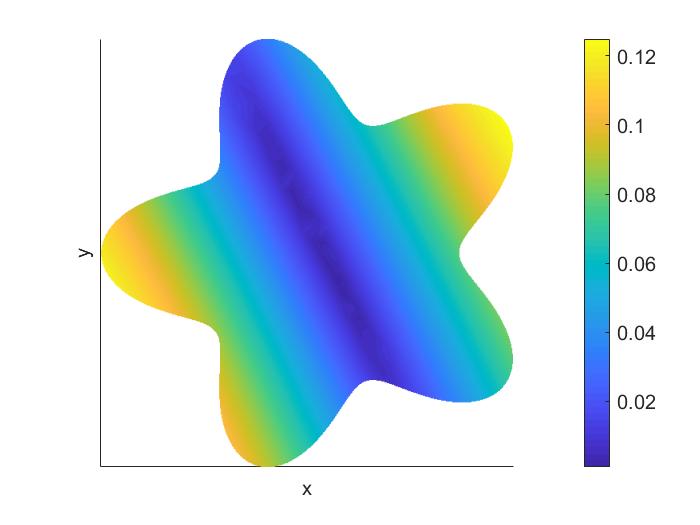}
		\includegraphics[width=0.3\textwidth]{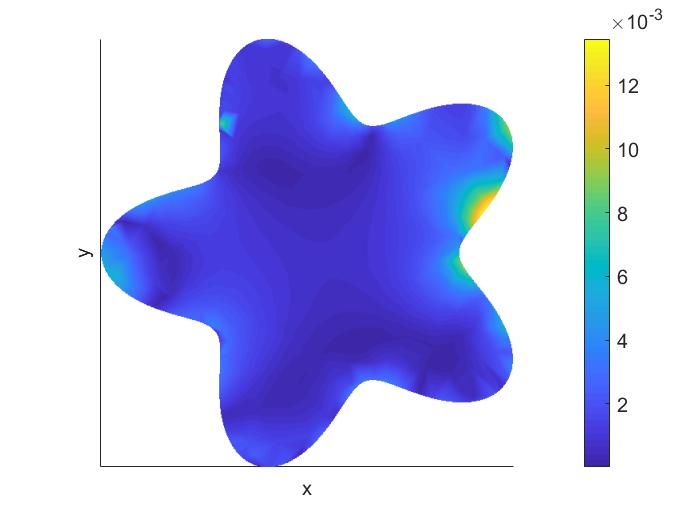}
		\caption{The experiments of solving the Helmholtz equation \eqref{eq:Ex3} by PINN and BINet. Deep Ritz is unable to solve the Helmholtz equation thus not exhibited. The first figure shows loss functions of different cases. Since for $k\geq 4$, PINN also fails, we show the errors when $k=1$. The errors of solutions solved by PINN method and BINet method are shown in the second and third figures, respectively.}
		\label{f:Ex3}
	\end{figure}

	%\textbf{Example 4.} An exterior Helmholtz equation 
	
	\subsection{Experimental Results on Solution Operators}
	\textbf{The Operator from Equation Parameters to Solutions.}
	In this example, we consider the Helmholtz equations with variable wavenumber $k$.
	\begin{equation}
		\begin{aligned}
			-\Delta u(x,y) - k^2u(x,y) = 0,\ (x,y)\in\Omega,\\
			u(x,y) = H_0^1(k\sqrt{x^2+y^2}), \ (x,y)\in\partial\Omega,
		\end{aligned}
		\label{eq:Ex4}
	\end{equation}
	In the training phase, we set $k\in[2,3.5]\cup[4.5,6]$. We also use double layer potential to construct the loss function, and after 5000 training epochs,  we show the relative $L^2$ error versus the wavenumber $k$ in Figure \ref{fig:Ex4}. From the first figure, the relative $L^2$ error is about $10^{-3}$ or $10^{-2}$. Compared with solving a single equation, the relative $L^2$ error is still small. The relative error increases slightly with the increase of $k$, which is because the Helmholtz equation becomes more difficult to solve when $k$ increases. This means that we have successfully learned the operator mapping of exterior parametric PDE problems on an unbounded domain. Most importantly, although $k$ is not selected between $[3.5,4.5]$ during training, the relative error is still small on the test when we take values in the interval $[3.5,4.5]$. This shows that our method has good generalization ability.
	\noindent
	\begin{figure}[t]
		\centering
		\includegraphics[width=0.9\textwidth]{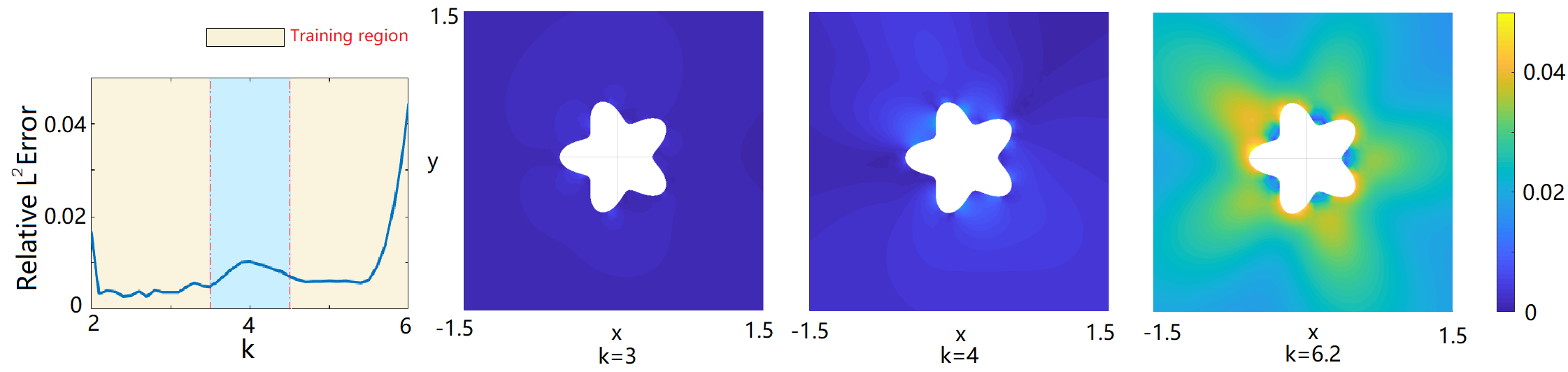}
		\caption{The results of the learning operators mapping the equation parameters $k$ (wavenumbers) to the solutions of Helmholtz equations \eqref{eq:Ex4}. The first figure shows the relative $L^2$ error of the solution with different wavenumber $k$, which shows BINet has successfully learned the solution operator and has generalization capability. The last three figures show the absolute error of the solutions mapping from different wavenumber $k$, i.e., $k=3, 4, 6.2$, respectively.}
		\label{fig:Ex4}
	\end{figure}

	\noindent \textbf{The Operator from Boundary Geometry to Solutions.}
	In this example, we consider a Laplace equation with parametric boundaries. The problem is 
	\begin{equation}
		\begin{aligned}
			-\Delta u(x,y) & = 0, \ &(x,y)\in\Omega_\beta,\\
			u(x,y) &= g(x,y;\beta), \ &(x,y)\in\partial\Omega_\beta,
		\end{aligned}
		\label{eq:ex5}
	\end{equation}
	where the boundary condition $g(x,y;\beta)=(x-x_{bc})(y-y_{bc})+(x-x_{bc})+(y-y_{bc})+1$, and $(x_{bc},y_{bc})$ is the barycenter of the $\Omega_\beta$. We assume that $\Omega_\beta$ can take any triangle in a domain. For simplicity, We can fix one vertex at the origin and one edge on the positive half x-axis, while the third vertex is in the first quadrant by translation and rotation. Then we can assume the vertex is $(0,0),(a,0), \text{and} (b,c)$. In this example, we assume $a,b,c$ can take any value in interval $[0,1]$. In this example, we choose a ResNet with eight blocks, and 100 neurons per layer. Single potential layer is used to calculate the boundary integral. We randomly selected 80 triangles to calculate the loss function, and after every 500 epochs, triangles will be randomly selected again. After training for 5000 epochs, we randomly choose two triangles, and the solutions of the each triangle by BINet method has shown in figure \ref{fig.Ex6}. The relative $L^2$ error is about $10^{-3}$. From this, we can see BINet has successfully learned the operator from boundary geometry to solution.
	\begin{figure}[t]
		\centering
		\includegraphics[width=0.9\textwidth]{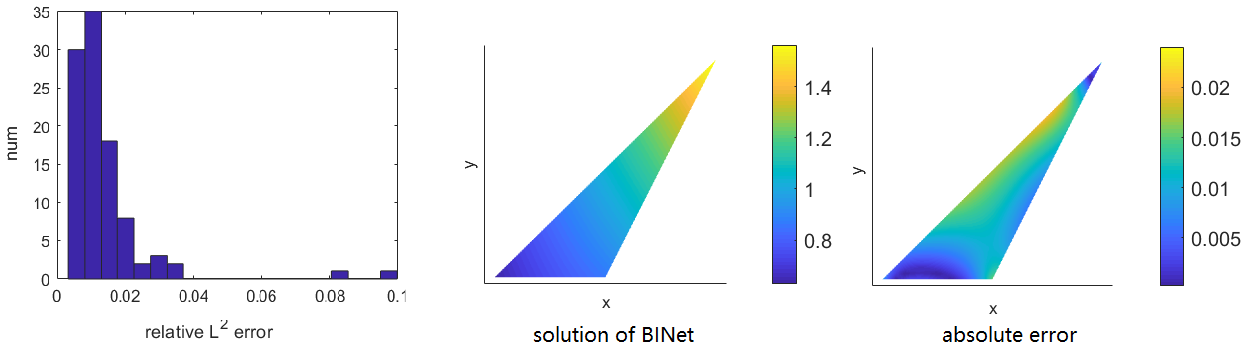}
		\caption{The results of learning operators from boundary geometry to solutions of the equation \eqref{eq:ex5}. This histogram of relative $L^2$ error of the solutions mapping from different triangle boundaries are shown in the first figure. One hundred triangle boundaries are randomly selected to give the distribution of the relative error. It is easy to check that more than half of the errors are less than $1\%$. For a typical triangle domain given by vertices $(0,0)$, $(0.3,0)$, $(0.6,0.4)$, the solution and its absolute error are shown in the right two figures respectively.}
		\label{fig.Ex6}
	\end{figure}
	
	\section{Conclusion}
	\label{sec:con}
	We have developed a new neural network method called BINet to solve PDEs. In BINet, the solution of PDE is represented by boundary integral composed of an explicit kernel and an unknown density which is approximated by a neural network. Then the PDE is solved by learning the boundary integral representation to fit the boundary condition. Since the loss function measures only the misfit between the integral representation and the boundary condition, BINet has less hyper-parameters and lower sampling dimensions than many other neural network-based PDE solvers. Because the boundary integral satisfies PDE automatically in the interior and exterior of the boundary,  BINet can solve bounded and unbounded PDEs. Furthermore, BINet can learn operators from PDE parameters including coefficients and boundary geometry to solutions. Besides, using the NTK technique, we prove that BINet converges as the width of the network goes to infinity.  We test BINet with the Laplace equation and Helmholtz equation in extensive settings. The numerical experiments show that BINet works effectively for many cases such as interior problems, exterior problems, high wavenumber problems. The experiments also illustrate the capability of BINet in learning solution operators. All the experiments verify the advantages of BINet numerically.  Although our method exhibits competitive performance against the PINN method and DeepRitz method in many situations, the requirement of high-precision boundary integration limits further applications in higher-dimensional problems. This will be the direction of improving BINet in the future.

	%\section*{Acknowledgments}

	%MS++++++++++++++++++++++++++++++ Reference ++++++++++++++++++
	%% 参考文献
	\bibliographystyle{unsrt} 
	\bibliography{refs}
	
	\newpage

	\appendix
	\section*{Appendix}
	%In the appendix, we will make a supplement to the main paper. In the appendix \ref{ap1}, we will give a review of the PINN and Deep Ritz method, which are the 
	\section{A review of the PINN and Deep Ritz method}
	\label{ap1}
	\subsection{PINN method}
	To solve the linear PDE
	\begin{equation}
		\begin{aligned}
			\mathcal{L}u(x)=0, \ x\in\Omega,\\
			u(x) = g(x), \ x\in\partial\Omega,
		\end{aligned}
		\label{eq:app1}
	\end{equation}
	the main idea of the PINN\cite{raissi2019physics} method is to use a neural network $u(x;\theta)$ as an ansatz to approximate the solution $u(x)$, where $\theta$ represents the trainable parameters in the neural network. There was other work of the similar idea such as \cite{berg2018unified,lagaris2000neural,sirignano2018dgm}. Then we can use the automatic differentiation tool to calculate the derivative $\mathcal{L}u(x;\theta)$ and define the loss function 
	$$L_1(\theta) = \|\mathcal{L}u(x;\theta)\|^2_{\Omega}.$$ For the boundary conditions, we can define the loss function 
	$$L_2(\theta)=\|u(x;\theta)-g(x)\|^2_{\partial\Omega}.$$
	Finally, we can combine the loss function $L_1$ and $L_2$ with a hyper-parameter $\beta$ to get loss function,
	$$
	L(\theta) = L_1+\beta L_2 = \|\mathcal{L}u(x;\theta)\|^2_{\Omega}+\beta\|u(x;\theta)-g(x)\|^2_{\partial\Omega}.
	$$
	By minimizing the loss function $L$, PINN will get the approximation solution of the PDE \eqref{eq:app1}.
	
	\subsection{Deep Ritz method}
	For the specific PDE problems in equation \eqref{eq:app1}, we can change the equation into a Ritz variational form. This is the main idea of the Deep Ritz method\cite{weinan2018deep}. For instance, if we consider a Laplace equation,
	\begin{equation}
		\begin{aligned}
			-\Delta u(x)=0,\ x\in\Omega,\\
			u(x) = g(x), \ x\in\partial\Omega,
		\end{aligned}
		\label{eq:app2}
	\end{equation}
	we can solve the equation equivalently by minimizing the following Ritz variational problem
	\begin{equation}
		\begin{aligned}
			\int_{\Omega}\frac{1}{2}|\nabla u(x) |^2 dx.
		\end{aligned}
		\label{eq:Ritz}
	\end{equation}
	We also use a neural network $u(x;\theta)$ to approximate the solution of the PDE, and we can use the automatic differentiation tool to calculate the gradient $\nabla u(x;\theta)$ of the neural network. So the variation \eqref{eq:Ritz} can naturally be used as a loss function, defined as  
	$$
	L_1(\theta) = \int_{\Omega}\frac{1}{2}|\nabla u(x;\theta) |^2 dx.
	$$
	For the boundary condition, the loss function $L_2$ also can be defined as 
	$$L_2(\theta)=\|u(x;\theta)-g(x)\|^2_{\partial\Omega}.$$
	Finally, the loss funtion can be defined as \begin{equation}
		L(\theta) = L_1+\beta L_2 =\int_\Omega\frac{1}{2}|\nabla u(x;\theta)|^2dx+\beta\|u(x;\theta)-g(x)\|^2_{\partial\Omega},
	\end{equation}  
	where $\beta$ is also the hyper-parameter. By minimize the loss function $L(\theta)$, Deep Ritz method will get the solution of the PDE \eqref{eq:app2}.
	
	This paper introduce a residual network as the anastz to approximate the solution. The residual network is also used in our work to approximate the density function in the boundary integral form. The architecture of the residual network is shown in Figure \ref{fig.ritz}.
	
	\begin{figure}[t]
		\centering
		%\begin{minipage}[t]{0.25\linewidth}
		\centering
		\includegraphics[width=0.8\textwidth]{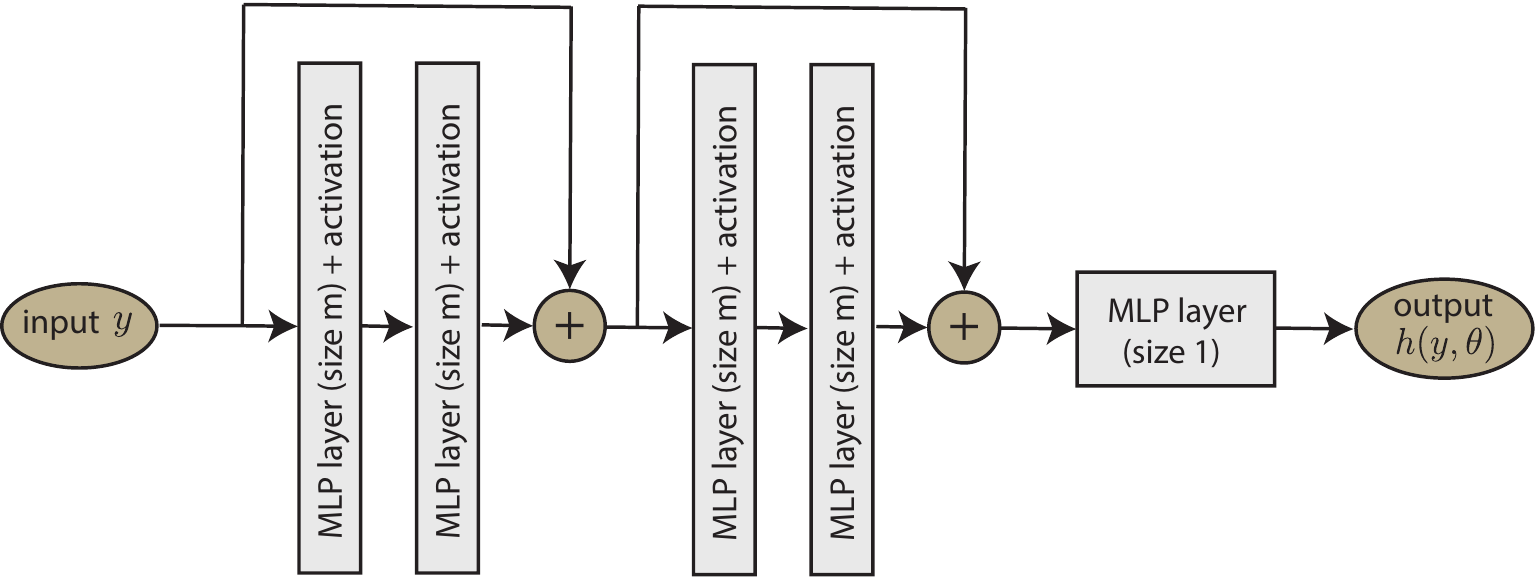}
		%\end{minipage}%
		\centering
		\caption{The architecture of the residual network.}
		\label{fig.ritz}
	\end{figure}

	\section{The fundamental solution of different equations}
	In this section, we make some supplementary introductions to the fundamental solution. Before defining the fundamental solution, we first introduce the $\delta$-function, 
	\begin{definition}
		A function $\delta(x)$ called n-dimensional $\delta$-function if $\delta(x) = \left\{\begin{array}{cc}
			0, & x \neq 0 \\
			\infty, & x=0,
		\end{array}\right.$, and for all functions $f$ that is continuous at $a$ we have
		$\int_{\mathbb{R}^n} f(x)  \delta(x-a) d x=f(a).$
	\end{definition}
	Then, for the PDE
	\begin{equation}
		\mathcal{L}u(x)=0,\ x\in\Omega,
		\label{eq:app0}
	\end{equation}
	we can define the corresponding fundamental solution $G$ of the equations (\ref{eq:app0}).
	\begin{definition}
		A function $G(x,y)$ is called the fundamental solution corresponding to equations (\ref{eq:app0}) if $G(x,y)$ is symmetric about $x$ and $y$ and $G(x,y)$ satisfy
		$$
		\mathcal{L}_yG(x,y)=\delta(|y-x|),
		$$
		where $(x,y)\in\mathbb{R}^n\times\mathbb{R}^n$ and $L_y$ is the differential operator $L$ which acts on component $y$.
	\end{definition}

	Limited by the length of the article, although we only introduce the fundamental solutions of Laplace equations and Helmholtz equations in two-dimensional cases in detail, in general, BINet can be applied as long as the fundamental solution of $\mathcal{L}$ in $\mathbb{R}^d$ can be obtained. Let's give a few more examples. More details can be found in \cite{hsiao2008boundary,kellogg1953foundations,ying2009fast}.
	\subsection{The Laplace Equations}
	If we consider a Laplace equation
	\begin{equation}
		-\Delta u(x)=0, \ x\in\Omega,
	\end{equation}
	the fundamental solution has the following form 
	
	$$
	G(x,y)=\left\{\begin{array}{cc}
		-\frac{1}{2\pi}ln|x-y|  & n=2 \\[0.5em]
		\frac{1}{(n-2)w_n}\frac{1}{|x-y|^{n-2}} & n\geq 3, 
	\end{array}
	\right.
	$$
	where $n$ is the dimension of the equation and $w_n$ is the volume of the n-dimensional unit sphere. Then the fundamental solution $G$ satisfies
	$$
	-\Delta_yG(x,y) = \delta(|x-y|).
	$$

	\subsection{The Helmholtz Equations}
	The Helmholtz equation has the following form
	\begin{equation}
		-\Delta u(x)-k^2u(x)=0, \ x\in\Omega,
	\end{equation}
	where $k$ is a real number. The fundamental solution of the Helmholtz equation has the following form 
	$$
	G(x,y)=\left\{\begin{array}{cc}
		\frac{i}{4}H_0^1(k|x-y|)  & n=2 \\[0.2em]
		\frac{1}{(n-2)w_n}\frac{e^{ik|x-y|}}{|x-y|^{n-2}} & n\geq 3, 
	\end{array}
	\right.
	$$
	where $n$ is the dimension of the equation and $w_n$ is also the volume of the n-dimensional unit sphere. Then the fundamental solution $G$ satisfies
	$$
	-\Delta_yG(x,y)-k^2G(x,y) = \delta(|x-y|).
	$$

	\subsection{The Navier's Equations}
	We consider Navier's equations (also called Lam$\acute{e}$ system). These are famous equations in linear elasticity for isotropic materials, and the governing equations are
	\begin{equation}
		\begin{aligned}
			-\mu\Delta u(x) - (\lambda+\mu)\nabla(\nabla\cdot u(x)) = 0,
		\end{aligned}
		\label{Navier}
	\end{equation}
	where $\lambda,\mu>0$ are the Lam$\acute{e}$ constants of the elastic material, and $u(x)\in\mathbb{R}^n$ is the displacement vector. The fundamental solution of the equation \eqref{Navier} is 
	\begin{equation}
		G(x, y)=\left\{\begin{aligned}
			\frac{\lambda+3 \mu}{8 \pi \mu(\lambda+2 \mu)}\left[\frac{1}{|x-y|} \mathbf{I}_{3}\right.\\
			\left.+\frac{\lambda+\mu}{\lambda+3 \mu} \frac{1}{|x-y|^{2}}(x-y)(x-y)^{\mathrm{T}}\right], & n=3, \\
			\frac{\lambda+3 \mu}{4 \pi \mu(\lambda+2 \mu)}\left[\ln \frac{1}{|x-y|} \mathbf{I}_{2}\right.\\
			\left.+\frac{\lambda+\mu}{\lambda+3 \mu} \frac{1}{|x-y|^{2}}(x-y)(x-y)^{\mathrm{T}}\right], & n=2.
		\end{aligned}\right.
		\label{FS_navier}
	\end{equation}
	It means that G(x,y) defined by the \eqref{FS_navier} satisfies the following equation,
	\begin{equation}
		-\mu\Delta_y G(x,y) - (\lambda+\mu)\nabla_y(\nabla_y\cdot G(x,y)) = \delta(|x-y|)\mathbf{I}_n, 
	\end{equation}
	where $\mathbf{I}_n$ is the n-order identity matrix.
	
	\subsection{The Stokes Equations}
	Stokes equations are well known in the incompressible viscous fluid model. The general form of the Stokes equations is 
	\begin{equation}
		\begin{aligned}
			-\mu\Delta u(x)+\nabla p(x)&=f(x),\\
			\nabla\cdot u(x)&=0,\ x\in\Omega\subset\mathbb{R}^n,
		\end{aligned}
		\label{eq:stoke}
	\end{equation}
	where $u$ and $p$ are the velocity and pressure of the fluid flow, respectively, and $\mu$ and $f$ are the given dynamic viscosity of the fluid and forcing term, respectively. 
	
	For n=2 the fundamental solutions of the \eqref{eq:stoke} are
	\begin{equation}\begin{aligned}
			\boldsymbol{v}^{k}(x, y) &=\frac{1}{4 \pi \mu}\left\{\log \frac{1}{|x-y|} \boldsymbol{e}^{k}+\sum_{j=1}^{2} \frac{\left(x_{k}-y_{k}\right)\left(x_{j}-y_{j}\right) \boldsymbol{e}^{j}}{|x-y|^{2}}\right\} \\
			q^{k}(x, y) &=\frac{\partial}{\partial x_{k}}\left\{-\frac{1}{2 \pi} \log \frac{1}{|x-y|}\right\},
	\end{aligned}\end{equation}
	and for n=3 the fundamental solutions of the \eqref{eq:stoke} are
	\begin{equation}\begin{aligned}
			\boldsymbol{v}^{k}(x, y) &=\frac{1}{8 \pi \mu}\left\{\frac{1}{|x-y|} \boldsymbol{e}^{k}+\sum_{j=1}^{3} \frac{\left(x_{k}-y_{k}\right)\left(x_{j}-y_{j}\right) \boldsymbol{e}^{j}}{|x-y|^{3}}\right\} \\
			q^{k}(x, y) &=\frac{\partial}{\partial x_{k}}\left\{-\frac{1}{4 \pi} \frac{1}{|x-y|}\right\},
	\end{aligned}\end{equation}
	where $k=1,\cdots,n$ and $e^k$ denotes the unit vector along the $x_k$-axis. $v^k$ and $q^k$ satisfy 
	\begin{equation}
		\begin{aligned}
			-\mu\Delta_x v^k(x,y)+\nabla_x q^k(x,y)&=\delta(|x-y|)e^k,\\
			\nabla_x\cdot v^k(x,y)&=0,
		\end{aligned}
		\label{eq:stoke}
	\end{equation}
	where $x,y\in\mathbb{R}^n$.
	
	\subsection{The Biharmonic Equation}
	The Biharmonic Equation is a single scalar 4th-order equation, which can be reduced from plane elasticity and plane Stokes flow. We consider a two dimensional Biharmonic equation,
	\begin{equation}
		\Delta^2u(x)=0, \ x\in\Omega\subset\mathbb{R}^2.
		\label{Bihar}
	\end{equation}
	The fundamental solution of the equation \eqref{Bihar} is 
	\begin{equation}
		\begin{aligned}
			G(x,y) = \frac{1}{8\pi}|x-y|^2\log|x-y|, \ x,y\in\mathbb{R}^2,
		\end{aligned}
	\end{equation}
	where $G(x,y)$ satisfies 
	$$\Delta_y^2 G(x,y)=\delta(|x-y|).$$

	\section{The convergence analysis of BINet}
	
	\subsection{The structure for solving PDEs using neural networks}\label{ap.pdeneural}
	BINet consists of a neural network such as MLP and an integral operator performed on the output of the neural network. 
	Thus, the output of BINet reads $$v(x,\theta)=\mathcal{A}[h](x,\theta),$$ 
	where $h=h(y,\theta)\in\mathbb{R}$ is the neural network approximating the density function in the boundary integral form, $y \in\mathbb{R}^d$ is the $d$-dimensional variable. The operator $\mathcal{A}$ is performed on the output of the neural network which completes the whole architecture.  And the loss function is  $$\mathcal{L} = \|v(x,\theta)-\tilde{v}(x)\|^2_2,$$ with label function $\tilde{v}(x)$.
	
	For a more general setup, the operator $\mathcal{A}$ has different forms. 
	For PINN/DGM method, the operator is directly the partial differential operator, implying
	\begin{equation*} 
		\mathcal{A}[u](x,\theta) = -\Delta u(x,\theta),
	\end{equation*}
	where $u(x,\theta)$ is the approximation of the solution.
	The Deep-Ritz method for solving the Laplace equation $-\Delta u(x) = 0$ is to minimize the optimization problem $\min_{\theta} L(\theta)$  where part of the loss reads
	\begin{align*}
		L_1(\theta)&=\int_{\Omega}\frac{1}{2}|\nabla u(x,\theta)|^2dx.
	\end{align*}
	It follows that the corresponding operator has the following form
	\begin{equation*}
		\mathcal{A}[u](x) = \frac{1}{\sqrt{2}}\nabla u(x,\theta).
	\end{equation*}
	Therefore in the view of the operator applied on the neural network, different from the integral type operator of BINet, PINN and Deep Ritz methods have extra differential operators although the Deep Ritz method decreases the order from the second to the first.

	\begin{definition}\label{def.ad}
		The operator $\mathcal{A}$ is admissible if the following conditions hold: \begin{enumerate}
			\item $\mathcal{A}(\lambda_1 f_1+\lambda_2f_2)=\lambda_1 \mathcal{A}f_1+\lambda_2\mathcal{A}f_2$ (linear property);
			\item $[\mathcal{A}_x, \mathcal{A}_{x'}]:=\mathcal{A}_x\mathcal{A}_{x'}-\mathcal{A}_{x'}\mathcal{A}_x=0$ (commutative property);
			\item $[\mathcal{A},\frac{\pl}{\pl \theta}]=0$ (parameter variant).
		\end{enumerate}
	\end{definition}
	It is easy to check that the operators of PINN, Deep-Ritz, and our BINet all satisfy the admissible property. And the admissibility is crucial in the following proof.
	
	The different design of the neural network and the operator makes the network different. Here, we adopt the typical settings of the neural network as an MLP. As the integral operator $\mathcal{A}$ is bounded, thus the convergence results can be obtained in our BINet and the proof is shown in Appendix 3.3. Here the structure of the neural network is introduced first for the derivation of the NTK form.
	
	The $L$-hidden layer MLP is defined as 
	\begin{align}
		\text{input layer: } & \quad g^{(0)} = y,\\
		\text{hidden layer:} & \quad g^{(l)}(y) \overset{\Delta}{=} \sqrt{\frac{c_\sigma}{d_l}}\sigma(f^{(l)}(y)), \;\; f^{(l)}(y)= W^{(l)}g^{(l-1)}(y), \; l\in[L]\\
		\text{output layer:} & \quad h(y,\theta)=f^{(L+1)}=W^{(L+1)}g^{(L)}(y),
	\end{align}
	where $l\in[L]\overset{\Delta}{=}\{1,2,\cdots,L\}$ is the hidden layer, $\theta$ is the trainable parameters which is the standard representation for the weights $W^{(l)}$, $d_l$ is the width of the $l$-th layer and $c_\sigma={\bf E}_{u\sim N(0,1)}\sigma(u)$. 
	
	%The final output of the PDE-based neural network is obtained by applying the operator $\mathcal{A}$ on the output of the neural network $f(x,\theta)$ as 
	%\begin{equation}
	%v(y,\theta) = [\mathcal{A}f](y,\theta),
	%\end{equation}
	%where $[\mathcal{A}f](y,\theta)$ is a function of $y$ generated by applying the operator $\mathcal{A}$ on the left of the function $f=f(x,\theta)$. For simplicity, the notation $\mathcal{A}_yf= [\mathcal{A}f](y,\theta)$ is used in the following context. 

	%And the loss function is given by the $\mathrm{L}^2$-norm
	%\begin{equation}
	%L(\theta) = \frac{1}{2}\|v(y,\theta)-\tilde{v}(y)\|^2_2,
	%\end{equation}
	%where $\tilde{v}$ is the label vector on $y$ and the coefficient $\frac{1}{2}$ is not necessary.

	\subsection{The dynamic neural tangent kernel}
	
	We have chosen the MLP as the neural network in the analysis for simplicity. A similar analysis can also be done for other structures as the convergence results of such neural networks are reported in the literature \cite{allen2018learning, huang2020dynamics}. Applying the integral operator on the neural networks should also give similar convergence results. Thus different schemes here imply different forms of the operator $\mathcal{A}$, see Appendix \ref{ap.pdeneural} for detail.

	The training process of the neural ODE is basically to minimize the loss by the method based on the gradient. One typical scheme is the gradient descent method which has the form 
	\begin{equation}
		\theta_{n+1} = \theta_n -\alpha\frac{\pl L}{\pl \theta}.
	\end{equation}
	When the learning rate $\alpha\to0$, we have the limiting gradient flow
	\begin{equation}
		\frac{d\theta}{d t} = -\frac{\pl L}{\pl \theta},
	\end{equation}
	where $t$ is the continuous version of index of the learning steps in the training process. More precisely, for the weight matrix $W^{(l)}$, we have the evolution 
	$\frac{dW^{(l)}}{d t} = -\frac{\pl L}{\pl W^{(l)}}.$
	
	Hence the evolution of the prediction satisfies the following form
	\begin{align}
		\begin{split}
			\frac{dv(x,\theta)}{dt} &= \sum_{\theta_p}\frac{\pl v}{\pl \theta_p}\frac{\pl \theta_p}{\pl t} \\
			&=\sum_{\theta_p}-\mathcal{A}[\frac{\pl h}{\pl \theta_p}](x)\frac{\pl L}{\pl \theta_p}\\
			&=\sum_{\theta_p}-\mathcal{A}[\frac{\pl h}{\pl \theta_p}](x)\langle\frac{\pl L}{\pl v}(x'),\frac{\pl v}{\pl \theta_p}(x')\rangle\\
			&=\sum_{\theta_p}-\mathcal{A}[\frac{\pl h}{\pl \theta_p}](x)\langle \zeta(x'),\mathcal{A}[\frac{\pl h}{\pl \theta_p}](x')\rangle\\
			&=-\langle \zeta(x'), \sum_{\theta_p}\mathcal{A}[\frac{\pl h}{\pl \theta_p}](x)\mathcal{A}[\frac{\pl h}{\pl \theta_p}](x')\rangle,
		\end{split}
	\end{align}
	where $\zeta(x') = \frac{\pl L}{\pl v}(x')=v(x')-\tilde{v}(x')$ is the vector of loss,  and $p$ denotes the index of the learning parameter. We denote the dynamic Neural Tangent Kernel (DNTK) for the PDE-based neural network as 
	\begin{equation}
		\mathcal{N}(x,x') = \sum_{\theta_p}\mathcal{A}[\frac{\pl f}{\pl \theta_p}](x)\mathcal{A}[\frac{\pl f}{\pl \theta_p}](x')=\sum_{l}\langle\mathcal{A}[\frac{\pl f}{\pl W^{(l)}}](x),\mathcal{A}[\frac{\pl f}{\pl W^{(l)}}](x')\rangle_W,
	\end{equation}
	where $\langle\cdot,\cdot\rangle_W$ is defined as the summation over each component index of $W^{(l)}$.
	
	Next, we would give the explicit form of the DNTK for further analysis.
	Recall that the output of the MLP in PDE-based neural network has the following form 
	\begin{align}
		\begin{split}
			h(x,\theta) &= f^{(L+1)}(x) = W^{(L+1)}g^{(L)}(x)\\
			& = W^{(L+1)}\sqrt{\frac{c_\sigma}{d_L}}\sigma\biggl(\cdots W^{(l+1)}\sqrt{\frac{c_\sigma}{d_l}}\sigma\bigl(W^{(l)}\sqrt{\frac{c_\sigma}{d_{l-1}}}\sigma(\cdots) \bigr)\cdots\biggr),
		\end{split}
	\end{align}
	where we have omitted the explicit dependence of $\theta$ in the formula for simplicity. And thus
	\begin{equation}
		h(x,\theta)=W^{(L+1)}\sqrt{\frac{c_\sigma}{d_L}}\sigma\biggl(\cdots f^{(l+1)} \cdots\biggr),
	\end{equation}
	where 
	\begin{align}
		\begin{split}
			f^{(l+1)} &= W^{(l+1)}g^{(l)},\\
			g^{(l)} &= \sqrt{\frac{c_\sigma}{d_l}}\sigma\bigl(f^{(l)}\bigr).
		\end{split}
	\end{align}
	To give the form of the DNTK, the key is to give the form of $\frac{\pl h}{\pl W^{(l)}}$. From above forms, we can obtain
	\begin{align}
		\begin{split}
			\frac{\pl h}{\pl W^{(l)}} & = \frac{\pl h}{\pl f^{(l)}}\frac{\pl f^{(l)}}{\pl W^{(l)}}\\
			& = \frac{\pl h}{\pl f^{(l+1)}}\frac{\pl f^{(l+1)}}{\pl g^{(l)}}\frac{\pl g^{(l)}}{\pl f^{(l)}}\frac{\pl f^{(l)}}{\pl W^{(l)}}\\
			& = \biggl(\frac{\pl h}{\pl f^{(l+1)}}W^{(l+1)}\sqrt{\frac{c_\sigma}{d_l}}S^{(l)}\biggr)^T\bigl(g^{(l-1)}\bigr)^T\\
			& = \sqrt{\frac{c_\sigma}{d_l}}S^{(l)}\bigl(W^{(l+1)}\bigr)^T\bigl(\frac{\pl h}{\pl f^{(l+1)}}\bigr)^T\bigl(g^{(l-1)}\bigr)^T,
		\end{split}
	\end{align}
	where we have used the denotation
	\begin{equation}
		\bigl[S^{(l)}\bigr]_{ij} = \bigl[\dot{\sigma}(f^{(l)})\bigr]_i\delta_{ij}.
	\end{equation}
	
	By defining $b^{(l)} = \bigl(\frac{\pl h}{\pl f^{(l)}}\bigr)^T$ and we obtain
	\begin{equation}
		\frac{\pl h}{\pl W^{(l)}} = b^{(l)}\bigl(g^{(l-1)}\bigr)^T, l\in[L]
	\end{equation}
	where $b^{(l)}$ satisfies the induction relation
	$b^{(l)} = \sqrt{\frac{c_\sigma}{d_l}}S^{(l)}\bigl(W^{(l+1)}\bigr)^Tb^{(l+1)}$.
	
	With the admissible property of $\mathcal{A}$ in the sense of Definition \ref{def.ad}, we have the DNTK as
	\begin{align}
		\begin{split}
			\mathcal{N}(x,x') &=\sum_{l}\bigl\langle\mathcal{A}[\frac{\pl h}{\pl W^{(l)}}](x),\mathcal{A}[\frac{\pl h}{\pl W^{(l)}}](x')\bigr\rangle_W\\
			& = \sum_l\bigl\langle\mathcal{A}[b^{(l)}\bigl(g^{(l-1)}\bigr)^T](x) ,\mathcal{A}[b^{(l)}\bigl(g^{(l-1)}\bigr)^T](x') \bigl\rangle\\
			& = [\mathcal{A}\mathcal{K}\mathcal{A}](x,x').
		\end{split}
	\end{align}
	where $\mathcal{K}(y, y')$ is the dynamic neural tangent kernel of the MLP with the following form \cite{allen2019convergence}
	\begin{equation}
		\mathcal{K}(y,y')=\sum_l\bigl\langle b^{(l)}(y)\bigl(g^{(l-1)}(y)\bigr)^T, b^{(l)}(y')\bigl(g^{(l-1)}(y')\bigr)^T \bigl\rangle,
	\end{equation} and $[\mathcal{A}\mathcal{K}\mathcal{A}](x,x')$ is a function given by the kernel $\mathcal{K}(y,y')$ operated by $\mathcal{A}$ on its head and the tail for performing with respect to the former variable $y$ and latter variable $y'$. 
	
	Denote the constant neural tangent kernel in \cite{arora2019exact} as 
	\begin{equation}
		\Theta^{(L)}(y,y')=\sum_{l=1}^{L+1}\biggl(\Sigma^{(l-1)}(y,y')\prod_{l'=l}^{L+1}\dot{\Sigma}^{(l')}(y,y')\biggr),
	\end{equation}
	where $\Theta^{(L)}(y,y')$ is given by a reduction form
	\begin{align}
		\Sigma^{(0)}(y,y') &= y^Ty',\\
		\Lambda^{(l)}(y,y') &= \begin{pmatrix}
			\Sigma^{(l-1)}(y,y) & \Sigma^{(l-1)}(y,y')\\
			\Sigma^{(l-1)}(y',y) & \Sigma^{(l-1)}(y',y')\\ 
		\end{pmatrix},\\
		\Sigma^{(l)}(y,y') &= c_\sigma\mathbb{E}_{(u,v)\sim \mathcal{N}(0,\Lambda^{(l)})}[\sigma(u)\sigma(v)],\\
		\dot{\Sigma}^{(l)}(y,y') &= c_\sigma\mathbb{E}_{(u,v)\sim \mathcal{N}(0,\Lambda^{(l)})}[\dot{\sigma}(u)\dot{\sigma}(v)],\\
	\end{align}
	and $\dot{\Sigma}^{(L+1)}(y,y')=1$.

	%The theorem giving the convergence results of the kernel of the PDE-based neural network is based on the convergence results of the MLP kernel from Theorem 3.1 and Lemma F.2 in \cite{arora2019exact}.

	The convergence results of both initialization and during training depend on the Gaussian random initialization. The parameters $W^{(l)},l=0,1,\cdots,L+1$ are initialized by random Gaussian, i.e., 
	\begin{equation}
		W^{(l)}\sim \mathcal{G}(0,I),
	\end{equation}
	where $\mathcal{G}$ is the Gaussian distribution with the identity covariance matrix $I$. Such initialization agrees with the so-called ``LeCun"\cite{lecun2012efficient} and``Kaiming"\cite{he2015delving} initialization with only a constant difference.

	The convergence results of the initialization case and during-training case are summarized in Theorem 4.1 and 4.2 respectively. And the main proof is given in the Appendix \ref{ap.proof} using the results in \cite{arora2019exact}  and the bounded properties of the operator $\mathcal{A}$.

	\subsection{Proof of Lemma 1}\label{ap.definite}
	\begin{proof}
		The result can be obtained using the positive definiteness of the kernel $\Theta^{(L)}$ and the invertibility of the operator $\mathcal{A}$. 
		For any  $f\neq 0\in L^{2}(\partial\Omega)$, we have
		\begin{align*}
			&\int\int f(x) [\mathcal{A}\theta^{(L)}\mathcal{A}](x,x') f(x')dxdx' \\
			= & \int\int f(x) \int\int \tilde{G}(x,y) \Theta^{(L)}(y,y')\tilde{G}(x',y') dydy' f(x')dxdx'\\
			= & \int\int dydy' \int f(x)  \tilde{G}(x,y)dx\, \Theta^{(L)}(y,y')  \int \tilde{G}(x',y') f(x')dx'\\
			= & \int\int l(y) \Theta^{(L)}(y,y')  l(y')dydy',\\
		\end{align*}
		where $l(y) = \int f(x)  \tilde{G}(x,y)dx$ and we have omitted the integral domain $\partial\Omega$ for simplicity.  For double layer potential, $\mathcal{A}: L^{2}(\partial\Omega)\to L^{2}(\partial\Omega)$ is invertible. \cite{verchota1984layer}  For single layer potential, there is an invertible theorem but with some constraints, i.e., $\mathcal{A}: L^{2}(\partial\Omega)\to L_1^{2}(\partial\Omega)$ is invertible if $\partial\Omega$ is a $C^\infty$ compact domain. \cite{gao1991layer} 
		Therefore $l\neq 0\in L^2(\partial\Omega)$  holds for double layer potential but holds for single layer potential with constraints.  As the constant kernel $\Theta^{(L)}(y,y')$ is positive definite proven in \cite{jacot2018neural}, the new kernel $[\mathcal{A}\Theta^{(L)}\mathcal{A}](x,x')$ is thus positive definite.
	\end{proof}

	\subsection{Proof of Theorem 4.1 and Theorem 4.2}
	\label{ap.proof}
	In this part, we would give proof of Theorem 4.1 and 4.2. The proof is based on the result of \cite{arora2019exact}.

	\begin{proof}(Proof of Theorem 4.1)
		
		\begin{lemma}
			(\text{Adopted from Theorem 3.1 from \cite{arora2019exact}})
			Fix $\epsilon>0$ and $\delta\in(0,1)$. Suppose the activation nonlinear function $\sigma(\cdot) = max\{\cdot,0\}$ is ReLU, the minimum width of the hidden layer $\text{min}_{l\in[L]}d_l\geq \Omega(\frac{L^6}{\epsilon^4}\log(L/\delta))$. Then for the normalized data $y$ and $y'$ where $\|y\|\leq1$ and $\|y'\|\leq1$, with probability at least $1-\delta$
			we have $$|\mathcal{K}_0(y,y')-\Theta^{(L)}(y,y')|\leq (L+1)\epsilon,$$
			where $\mathcal{K}_0(y,y')=\sum_l\bigl\langle b^{(l)}(y,0)\bigl(g^{(l-1)}(y,0)\bigr)^T, b^{(l)}(y',0)\bigl(g^{(l-1)}(y',0)\bigr)^T \bigl\rangle$ is the neural tangent kernel at initialization, i.e., $t=0$. 
			\label{lemma.a1}
		\end{lemma}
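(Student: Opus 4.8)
The plan is to recognize that Lemma \ref{lemma.a1} concerns only the empirical neural tangent kernel $\mathcal{K}_0(y,y')$ of the bare $L$-hidden-layer MLP at initialization, with no boundary operator $\mathcal{A}$ entering; consequently it is exactly the statement of Theorem 3.1 of \cite{arora2019exact}, and the cleanest route is to invoke that result directly, with the data rescaled so that $\|y\|\le 1$ and $\|y'\|\le 1$. For completeness I would reproduce the concentration argument that underlies it, which proceeds in three stages.

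First (forward pass), I would show by induction on $l$ that the empirical feature inner products $\langle g^{(l)}(y), g^{(l)}(y')\rangle$ concentrate around the population covariances $\Sigma^{(l)}(y,y')$, and that the analogous quantities built from $\dot\sigma(f^{(l)})$ concentrate around $\dot\Sigma^{(l)}(y,y')$. Conditioned on the previous layer, the coordinates of $f^{(l)}=W^{(l)}g^{(l-1)}$ are jointly centered Gaussian with covariance determined by $\langle g^{(l-1)}(y), g^{(l-1)}(y')\rangle$, so $\langle g^{(l)}(y), g^{(l)}(y')\rangle = \frac{c_\sigma}{d_l}\sum_i \sigma(f_i^{(l)}(y))\sigma(f_i^{(l)}(y'))$ is an average of sub-exponential functions of Gaussians; a Bernstein-type inequality then gives deviation $O(\epsilon)$ once $d_l$ is large, and the inductive hypothesis propagates the bound forward through the layers.

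Second (backward pass), I would control the quantities $b^{(l)}(y)=(\partial h/\partial f^{(l)})^{T}$, which satisfy the recursion $b^{(l)}=\sqrt{c_\sigma/d_l}\,S^{(l)}(W^{(l+1)})^{T}b^{(l+1)}$ from Appendix 3.2, and show that the induced inner products concentrate around the factors $\prod_{l'=l}^{L+1}\dot\Sigma^{(l')}(y,y')$. The delicate point is that the same Gaussian weights $W^{(l+1)}$ appear in both the forward and backward passes, so a naive independence argument fails; following \cite{arora2019exact} I would handle this by conditioning and exploiting the (approximate) independence between the back-propagated signal and the forward features in the wide limit.

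Third, since $\mathcal{K}_0(y,y')=\sum_l \langle b^{(l)}(y)(g^{(l-1)}(y))^{T}, b^{(l)}(y')(g^{(l-1)}(y'))^{T}\rangle$ is a sum of $L+1$ products of forward and backward terms, each matching a summand of $\Theta^{(L)}$, I would combine the two previous stages term by term and apply a union bound over the layers, which is exactly what forces the width requirement $\min_l d_l \ge \Omega(L^6 \epsilon^{-4}\log(L/\delta))$ and yields the cumulative error $(L+1)\epsilon$ with probability at least $1-\delta$. The main obstacle is preventing the per-layer errors from compounding multiplicatively across depth, so that the total deviation grows only linearly in $L$; this is where the $L^6/\epsilon^4$ width scaling and the careful conditioning arguments of \cite{arora2019exact} are essential. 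Finally, I note that Lemma \ref{lemma.a1} is precisely the input needed for Theorem \ref{th.c1}: writing $\mathcal{N}(x,x')=[\mathcal{A}\mathcal{K}_0\mathcal{A}](x,x')$ and using linearity of $\mathcal{A}$ gives $\mathcal{N}-[\mathcal{A}\Theta^{(L)}\mathcal{A}]=[\mathcal{A}(\mathcal{K}_0-\Theta^{(L)})\mathcal{A}]$, so that $\|\mathcal{A}\|_\infty\le A$ together with $|\mathcal{K}_0-\Theta^{(L)}|\le (L+1)\epsilon$ immediately produces the bound $(L+1)A^2\epsilon$.
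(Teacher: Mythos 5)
Your proposal matches the paper's treatment exactly: the paper offers no proof of this lemma beyond citing it as Theorem 3.1 of \cite{arora2019exact}, which is precisely the direct invocation you identify as the cleanest route, and your identification of the hypotheses ($\|y\|,\|y'\|\le 1$, ReLU, width $\Omega(L^6\epsilon^{-4}\log(L/\delta))$) with those of the cited theorem is accurate. Your supplementary three-stage sketch of the underlying concentration argument (forward-pass covariance concentration, backward-pass control of $b^{(l)}$ with the forward/backward weight-sharing subtlety, and the layerwise union bound yielding the $(L+1)\epsilon$ error) is a faithful summary of the cited proof and goes beyond what the paper records, as does your correct observation of how the lemma combines with $\|\mathcal{A}\|_\infty\le A$ to give Theorem \ref{th.c1}.
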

		
		By Lemma \ref{lemma.a1}, the result of Theorem 4.1 is directly obtained. 
	\end{proof}

	\begin{proof}(Proof of Theorem 4.2)
		
		Denote $W^{(l)}$ the learning parameter and $\tilde{W}^{(l)}$ the perturbation parameter with the perturbation matrices $\Delta W^{(l)}$, i.e., $\tilde{W}^{(l)}=W^{(l)}+\Delta W^{(l)}$, where $\|\Delta W^{(l)}\|_F$ is bounded. Presume that the input data satisfy a distribution $\mathcal{P}$ with the measure $P_{in}$. Let $\int P_{in}(dx) = 1$ and we denote $dx=P_{in}(dx)$ for simplicity.

		Let $\tilde{\mathcal{N}}(x,x')$ denote the perturbation of $\mathcal{N}(x,x')$ by the perturbation matrices $\Delta W^{(l)}$, i.e., 
		\begin{align}
			\mathcal{N}(x,x') & = \sum_l\langle\mathcal{A}[\frac{\partial f}{\partial W^{(l)}}](x), \mathcal{A}[\frac{\partial f}{\partial W^{(l)}}](x')\rangle_W\\
			\tilde{\mathcal{N}}(x,x') & = \sum_l\langle\mathcal{A}[\frac{\partial f}{\partial \tilde{W}^{(l)}}](x), \mathcal{A}[\frac{\partial f}{\partial \tilde{W}^{(l)}}](x')\rangle_W\\
		\end{align}
		
		\begin{lemma}(The reduction of the kernel of BINet)
			
			\begin{equation}\label{eq.reduce}
				|\mathcal{N}_t(x,x')-\mathcal{N}_0(x,x')| \leq A^2\|\mathcal{K}_t-\mathcal{K}_0\|_\infty,
			\end{equation}
			where $\|\mathcal{K}\|_\infty = \sup_{y,y'} |\mathcal{K}(y,y')|$.
			\label{lemma.reduction}
		\end{lemma}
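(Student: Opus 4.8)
The plan is to reduce the claimed bound to a direct estimate on the double integral defining $\mathcal{N}_t$, exploiting the linearity of $\mathcal{A}$ (condition 1 of Definition \ref{def.ad}) together with the fact, established just above, that $\mathcal{N}_t = [\mathcal{A}\mathcal{K}_t\mathcal{A}]$. First I would note that, since $\mathcal{A}$ acts linearly on its argument in each of the two slots, the difference of the two BINet kernels factors through the difference of the underlying MLP kernels:
$$\mathcal{N}_t(x,x') - \mathcal{N}_0(x,x') = [\mathcal{A}(\mathcal{K}_t - \mathcal{K}_0)\mathcal{A}](x,x').$$
Writing this out with the explicit integral form $\mathcal{A}[f](x) = \int_{\partial\Omega}\tilde{G}(x,y)f(y)\,dy$ gives
$$\mathcal{N}_t(x,x') - \mathcal{N}_0(x,x') = \int_{\partial\Omega}\int_{\partial\Omega}\tilde{G}(x,y)\,\bigl(\mathcal{K}_t - \mathcal{K}_0\bigr)(y,y')\,\tilde{G}(x',y')\,dy\,dy'.$$

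Next I would bound the double integral by pulling the scalar quantity $\|\mathcal{K}_t - \mathcal{K}_0\|_\infty = \sup_{y,y'}|(\mathcal{K}_t - \mathcal{K}_0)(y,y')|$ out of the integral, leaving a product of two one-variable $L^1$ norms of the kernel $\tilde{G}$:
$$|\mathcal{N}_t(x,x') - \mathcal{N}_0(x,x')| \leq \|\mathcal{K}_t - \mathcal{K}_0\|_\infty \left(\int_{\partial\Omega}|\tilde{G}(x,y)|\,dy\right)\left(\int_{\partial\Omega}|\tilde{G}(x',y')|\,dy'\right).$$
The final step identifies each of these $L^1$ kernel integrals with the operator-norm bound: for the integral operator $\mathcal{A}$ acting on $L^\infty(\partial\Omega)$, the induced operator norm is exactly $\|\mathcal{A}\|_\infty = \sup_x \int_{\partial\Omega}|\tilde{G}(x,y)|\,dy$, which is assumed to be at most $A$. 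Hence each factor is bounded by $A$, their product by $A^2$, and the claimed estimate $|\mathcal{N}_t(x,x') - \mathcal{N}_0(x,x')| \leq A^2\|\mathcal{K}_t - \mathcal{K}_0\|_\infty$ follows at once.

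I expect the only genuinely delicate point to be pinning down the precise meaning of the hypothesis $\|\mathcal{A}\|_\infty \leq A$, so that it truly controls $\sup_x\int_{\partial\Omega}|\tilde{G}(x,y)|\,dy$. If $\|\cdot\|_\infty$ is read as the $L^\infty\!\to\!L^\infty$ operator norm, this identification is exact and the argument closes immediately; if instead an $L^2$ operator norm were intended, one would replace the crude pull-out step by a Schur-test or Cauchy--Schwarz estimate to recover the same $A^2$ factor. Everything else is routine once the linearity of $\mathcal{A}$ and the product (tensor) structure of the double integral are in hand, so the real content of the lemma is simply that an integral operator bounded on the relevant function space turns a uniform perturbation of the MLP kernel into a uniform perturbation of the BINet kernel, amplified only by the square of the operator bound.
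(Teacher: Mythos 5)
Your proposal is correct and follows essentially the same route as the paper: both use the identity $\mathcal{N}_t-\mathcal{N}_0=[\mathcal{A}(\mathcal{K}_t-\mathcal{K}_0)\mathcal{A}]$ (from bilinearity and the previously established $\mathcal{N}=[\mathcal{A}\mathcal{K}\mathcal{A}]$) and then bound by $\|\mathcal{A}\|_\infty^2\|\mathcal{K}_t-\mathcal{K}_0\|_\infty\leq A^2\|\mathcal{K}_t-\mathcal{K}_0\|_\infty$. Your version is in fact more explicit than the paper's one-line computation, since you spell out the double-integral form and the reading of $\|\mathcal{A}\|_\infty$ as the $L^\infty\!\to\!L^\infty$ operator norm that makes the pull-out step legitimate.
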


		\begin{proof}
			\begin{align}
				\begin{split}
					|\tilde{\mathcal{N}}(x,x')-\mathcal{N}(x,x')| & = |\mathcal{A}\sum_l (\langle\frac{\partial f}{\partial \tilde{W}^{(l)}}, \frac{\partial f}{\partial \tilde{W}^{(l)}}\rangle-\langle\frac{\partial f}{\partial W^{(l)}}, \frac{\partial f}{\partial W^{(l)}}\rangle)\mathcal{A}|\\
					& \leq \|\mathcal{A}\|_\infty\|\mathcal{A}\|_\infty\|\widetilde{\mathcal{K}}-\mathcal{K}\|_\infty\\
					&\leq A^2\|\widetilde{\mathcal{K}}-\mathcal{K}\|_\infty.
				\end{split}
			\end{align}
			We have shown that the perturbation of the new kernel can be controlled by the perturbation of the kernel of the neural network. Thus, the perturbation during training of the new kernel of BINet can be controlled by that of the kernel of the neural network. We complete the proof.
		\end{proof}
		
		%Therefore, we need to check that the kernel of the neural network stays "lazy", i.e., $\|\widetilde{\mathcal{K}}(x,x')-\mathcal{K}(x,x')\|_2$ vanishes when the width is large enough during training in BINet. 
		
		\begin{lemma}(Adopted from \cite{arora2019exact})
			We have for all $t\geq 0$, $$|\mathcal{K}_t(y,y')-\mathcal{K}_0(y,y')|\leq \omega,$$ if  the following holds 
			\begin{equation}
				\|W^{(l)}(t)-W^{(l)}(0)\|_F = \mathcal{O}(A\frac{\sqrt{n}}{\lambda_0})\leq \omega\sqrt{m}
			\end{equation}
			for any $l$ and $t$.
			Here $\mathcal{K}_t(y,y')$ is the kernel along training step $t$ and $\mathcal{K}_0(y,y')$ is the kernel.
			\label{lemma.kernel}
		\end{lemma}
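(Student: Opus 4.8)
The plan is to establish stability of the MLP's neural tangent kernel $\mathcal{K}_t$ along the gradient flow under the stated hypothesis that every weight matrix remains in a Frobenius ball of radius $\omega\sqrt{m}$ about its initialization. Since from the DNTK derivation $\mathcal{K}_t(y,y')=\sum_l\langle b^{(l)}(y)(g^{(l-1)}(y))^T, b^{(l)}(y')(g^{(l-1)}(y'))^T\rangle$ is bilinear in the forward features $g^{(l)}$ and the backward features $b^{(l)}$, it is enough to control how far each of these two families of features moves between $t=0$ and time $t$; the kernel bound then follows from the bilinear structure. I would therefore reduce the statement to two feature-perturbation estimates together with one assembly step, all of which are the content of the corresponding estimates in \cite{arora2019exact}.

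First I would record the concentration facts at initialization: under the Gaussian initialization $W^{(l)}\sim\mathcal{G}(0,I)$ and the normalization $\|y\|\leq 1$, standard concentration over the random weights shows that, with probability at least $1-\delta$, the forward norms $\|g^{(l)}_0(y)\|$ and the backward norms $\|b^{(l)}_0(y)\|$ are bounded, uniformly over the layers $l\in[L]$ and over the given normalized inputs, by an absolute constant times a polynomial factor in $L$. Second, I would propagate the weight perturbation through the network: because $\sigma$ is $1$-Lipschitz and each pre-activation $f^{(l)}=W^{(l)}g^{(l-1)}$ is linear in its weight, an induction on $l$ gives $\|g^{(l)}_t(y)-g^{(l)}_0(y)\|$ and $\|b^{(l)}_t(y)-b^{(l)}_0(y)\|$ bounded by a quantity proportional to the total relative perturbation $\sum_{l'}\|W^{(l')}(t)-W^{(l')}(0)\|_F/\sqrt{m}$. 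Under the hypothesis $\|W^{(l)}(t)-W^{(l)}(0)\|_F\leq\omega\sqrt{m}$, each such feature perturbation is of order $\omega$ up to factors polynomial in $L$, which are absorbed by the assumed smallness $\omega\leq\mathrm{poly}(1/L,1/n,\ldots)$.

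Finally I would assemble the kernel bound. Writing $\mathcal{K}_t-\mathcal{K}_0$ as a telescoping difference of the bilinear form in which one swaps the time-$t$ features for the time-$0$ features one factor at a time, and then applying the triangle and Cauchy--Schwarz inequalities together with the initialization norm bounds of the first step and the feature-perturbation bounds of the second step, each layer contributes a term of order $\omega$. Summing over the $L+1$ layers and reabsorbing the resulting polynomial-in-$L$ constant into the definition of $\omega$ yields $|\mathcal{K}_t(y,y')-\mathcal{K}_0(y,y')|\leq\omega$ uniformly in the normalized inputs, as claimed.

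The hard part will be the second step, and specifically the backward features $b^{(l)}$, because they carry the diagonal matrix $S^{(l)}=\mathrm{diag}(\dot{\sigma}(f^{(l)}))$ whose entries are the discontinuous indicators $\mathbf{1}\{f^{(l)}_i>0\}$. Controlling $\|b^{(l)}_t-b^{(l)}_0\|$ therefore cannot rely on Lipschitz continuity alone; it requires showing that the activation pattern $S^{(l)}$ changes on only a small fraction of coordinates under an $\mathcal{O}(\omega)$ weight perturbation. This in turn rests on an anti-concentration estimate: at initialization the pre-activations $f^{(l)}_i$ are, with high probability, rarely within $\mathcal{O}(\omega)$ of zero, so only few neurons flip sign and their aggregate contribution to the feature perturbation stays $\mathcal{O}(\omega)$. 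This delicate sign-flip bound is exactly the estimate carried out in \cite{arora2019exact}, which I would invoke rather than reprove, the present lemma being a direct transcription of that estimate for the MLP approximating the density function in BINet.
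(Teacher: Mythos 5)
Your outline is correct, but it is worth noting that the paper does not actually prove this lemma: it declares it ``a trivial property of the neural network from \cite{arora2019exact}'' and imports it as a black box, spending its effort instead on verifying the lemma's \emph{hypothesis} (the bound $\|W^{(l)}(t)-W^{(l)}(0)\|_F\leq\omega\sqrt{m}$) for BINet, where the integral operator $\mathcal{A}$ enters. Your sketch reconstructs the internal argument behind the imported statement: concentration of the forward features $g^{(l)}$ and backward features $b^{(l)}$ at initialization, propagation of an $\mathcal{O}(\omega)$ relative weight perturbation through both recursions, and a telescoping/Cauchy--Schwarz assembly over the $L+1$ bilinear terms of $\mathcal{K}_t$. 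You also correctly isolate the genuinely delicate step --- stability of the activation pattern $S^{(l)}=\mathrm{diag}(\dot\sigma(f^{(l)}))$, which is not Lipschitz and requires an anti-concentration argument showing few pre-activations sit within $\mathcal{O}(\omega)$ of zero --- and you defer that to \cite{arora2019exact}, which is consistent with the lemma being explicitly labeled as adopted from that work. So the two routes agree on where the real difficulty lives and both ultimately lean on the same reference; what your version buys is an explicit account of why the hypothesis $\omega\leq\mathrm{poly}(1/L,1/n,\ldots)$ and the width condition $d_l\geq\mathrm{poly}(1/\omega)$ are needed (to absorb the polynomial-in-$L$ constants and to make the sign-flip fraction small), which the paper leaves entirely implicit. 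The only caveat is that your statement, like the original, should be read as holding with probability $1-\delta$ over the Gaussian initialization rather than deterministically, matching the phrasing of Theorem 4.2 rather than the bare lemma.
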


		Lemma \ref{lemma.kernel} is a trivial property of the neural network from \cite{arora2019exact}. But the existence of its condition in BINet is nontrivial because of the operator $\mathcal{A}$. Luckily in our BINet, the condition can be proven to hold since our integral operator is bounded. Specifically, from \cite{arora2019exact} if the following lemma holds for our BINet, the condition of Lemma \ref{lemma.kernel} exists. Thus the only thing left is to verify if the following lemma still holds for our BINet.
		
		\begin{lemma}(Adopted from Lemma F.7 in \cite{arora2019exact})
			Fix $\omega\leq \text{poly}(1/L,1/n,1/\log(1/\delta),\lambda_0)$ and $\delta\in(0,1)$. Suppose that $\text{min}_l{d_l}\geq \text{poly}(1/\omega)$. Fixed $l'\in[L+1]$, we have with probability at least $1-\delta$ over random initialization, for all $t\geq 0$
			\begin{equation}
				\|W^{(l)}(t)-W^{(l)}(0)\|_F = \mathcal{O}(A\frac{\sqrt{n}}{\lambda_0})\leq \omega\sqrt{m},
			\end{equation}
			if following inequalities hold for $\forall l\in[L+1]\backslash \{l'\}$:
			\begin{align}
				\begin{split}
					&\|{\bf \tilde{v}}_{nn}(t)-{\bf v}\|_2\leq \exp(-\frac{1}{2}\kappa^2\lambda_0t)\|{\bf \tilde{v}}_{nn}(0)-{\bf v}\|_2,\\
					&\|W^{(l)}(t)-W^{(l)}(0)\|_F \leq \omega\sqrt{m}.
				\end{split}
			\end{align}
		\end{lemma}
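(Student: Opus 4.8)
The plan is to adapt the bootstrapping estimate of \cite{arora2019exact} (their Lemma F.7) to the BINet setting, whose only structural novelty is the bounded integral operator $\mathcal{A}$ with $\|\mathcal{A}\|_\infty\le A$. The guiding observation is that since the output is $v(x,\theta)=\mathcal{A}[h](x,\theta)$ and $\mathcal{A}$ commutes with $\partial/\partial\theta$ by admissibility (Definition \ref{def.ad}), every gradient and Jacobian quantity entering Arora's layerwise estimates acquires at most a single extra factor of $A$. Hence the original bound $\mathcal{O}(\sqrt{n}/\lambda_0)$ should transfer verbatim, now scaled to $\mathcal{O}(A\sqrt{n}/\lambda_0)$, matching the conclusion for the excluded layer $l'$, and no new analytic idea beyond \cite{arora2019exact} is required.

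First I would write the gradient flow for the isolated layer, $\frac{dW^{(l')}}{dt}=-\frac{\partial L}{\partial W^{(l')}}$, and control the total displacement by the time-integral of the gradient norm, $\|W^{(l')}(t)-W^{(l')}(0)\|_F\le\int_0^t\bigl\|\frac{\partial L}{\partial W^{(l')}}(s)\bigr\|_F\,ds$. Using the chain rule together with $v=\mathcal{A}[h]$ and the residual $\zeta(s)=v(s)-\tilde{v}$, the gradient is a contraction of $\zeta(s)$ against $\mathcal{A}[\partial h/\partial W^{(l')}]$ summed over the $n$ training points; pulling $\mathcal{A}$ out with $\|\mathcal{A}\|_\infty\le A$ and applying Cauchy--Schwarz gives
\[
\Bigl\|\frac{\partial L}{\partial W^{(l')}}(s)\Bigr\|_F \le A\,\|\zeta(s)\|_2\,\|J^{(l')}(s)\|,
\]
where $J^{(l')}$ stacks $\partial h/\partial W^{(l')}(x_i)$ over the $n$ points. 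This is the single place where the factor $A$ enters, and it enters only once.

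Next I would bound the two remaining factors separately. For the residual, the first hypothesis supplies the exponential decay $\|\zeta(s)\|_2\le\exp(-\tfrac12\kappa^2\lambda_0 s)\|\zeta(0)\|_2$, while the initial residual satisfies $\|\zeta(0)\|_2=\mathcal{O}(\sqrt{n})$ by the standard concentration of the randomly initialized output over the $n$ labels. For the Jacobian, the product form $\partial h/\partial W^{(l')}=b^{(l')}(g^{(l'-1)})^{T}$ and the backpropagation recursion $b^{(l)}=\sqrt{c_\sigma/d_l}\,S^{(l)}(W^{(l+1)})^{T}b^{(l+1)}$ derived in Appendix \ref{ap.pdeneural} show $\|J^{(l')}\|$ is uniformly bounded by some $C$ provided the activations $g^{(l)}$ and backprop vectors $b^{(l)}$ stay near their initial values; this is guaranteed precisely by the second hypothesis $\|W^{(l)}(t)-W^{(l)}(0)\|_F\le\omega\sqrt{m}$ for every $l\ne l'$ together with the initialization estimates of \cite{arora2019exact}. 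Integrating the exponential then yields
\[
\|W^{(l')}(t)-W^{(l')}(0)\|_F \le \frac{2AC}{\kappa^2\lambda_0}\,\|\zeta(0)\|_2 = \mathcal{O}\!\left(\frac{A\sqrt{n}}{\lambda_0}\right),
\]
and the width condition $\min_l d_l\ge\text{poly}(1/\omega)$ (large $m$) makes the right side $\le\omega\sqrt{m}$, closing the estimate and feeding back into Lemma \ref{lemma.kernel}.

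The main obstacle is establishing that the Jacobian bound $C$ is genuinely uniform in $t$ and essentially width-independent: the displacement of $W^{(l')}$ can be controlled only after one knows the intermediate activations have not drifted, which in turn requires the hypothesis on all the other layers, so the layers are coupled and the statement is inherently an inductive step within a larger bootstrap. The delicate bookkeeping is to propagate the bounded operator $\mathcal{A}$ through Arora's depth-wise recursion and verify that $A$ does not compound across layers—appearing once in the gradient contraction rather than multiplicatively in each $b^{(l)}$—so that the final rate degrades by only a single factor $A$ as claimed.
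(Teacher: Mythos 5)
Your proposal follows essentially the same route as the paper's proof: both write the weight displacement as the time-integral of the gradient, use the chain rule through $v=\mathcal{A}[h]$ to pull out a single factor of $\|\mathcal{A}\|_\infty\leq A$, bound the Jacobian factor uniformly over time, and reduce the remaining estimates (residual decay, initial residual concentration, Jacobian norms) to Lemma F.7 of \cite{arora2019exact}. Your version is somewhat more explicit about where the exponential decay is integrated and why the Jacobian bound is uniform, but the key observation---that $A$ enters exactly once and the original bootstrap transfers verbatim---is identical to the paper's argument.
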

		
		\begin{proof}
			\begin{align}
				\begin{split}
					\|W^{(l)}(t)-W^{(l)}(0)\|_F &= \|\int_0^t\frac{dW^{(l)}(\tau)}{d\tau}d\tau\|_F\\
					&= \|\int_0^t\frac{\partial L(\tau)}{\partial W^{(l)}(\tau)}d\tau\|_F\\
					&= \|\int_0^t\frac{1}{n}\sum_{i=1}^n(\tilde{v}_i(\tau)-v_i)\mathcal{A}[\frac{\partial h(\theta(\tau),y_i)}{\partial W^{(l)}} ](x_i) d\tau\|_F\\
					& \leq C\frac{\|A\|_\infty}{n}\max_{0\leq\tau\leq t}\sum_{i=1}^n\|\frac{\partial h_{nn}(\theta(\tau),y_i)}{\partial W^{(l)}}\|_F\int_0^t\|{\bf \tilde{v}}_{nn}(\tau)-{\bf v}\|_2d\tau.
				\end{split}
			\end{align}
			
			Thus by Lemma F.7 in \cite{arora2019exact}, we can prove for our BINet, for any $t$ and $l$, the following also holds
			\begin{equation}
				\|W^{(l)}(t)-W^{(l)}(0)\|_F = \mathcal{O}(\sqrt{\frac{n}{\lambda_0}}) \leq \omega\sqrt{m}. 
			\end{equation}
		\end{proof}

		Till now we have verified the lazy properties are satisfied during training, by Lemma \ref{lemma.kernel} and \ref{lemma.reduction}, we complete the proof of Theorem 4.2.
		
	\end{proof}

	\section{Repeated Experiments with Random Initialization}
	With the limit of the number of pages, only some experimental results have been shown in the paper. To better exhibit the accuracy of the experiments, we repeated each experiment of all methods 5 times and show the all relative $L^2$ error. For each experiment, in addition to the initialization of network parameters, other conditions like the optimizer and learning rate are consistent. 
	
	\textbf{Lapalace Equation with Smooth Boundary Condition.}
	In the first experiment, the PINN, Deep Ritz and BINet methods are used respectively to solve the following equaiton
	\begin{equation}
		\begin{aligned}
			-\Delta u(x,y) = 0, \ (x,y)\in\Omega,\\
			u(x,y) = e^{ax}\sin(ay), \ (x,y)\in\partial\Omega.
		\end{aligned}
		\label{Ex1_eq_app}
	\end{equation}
	where $a$ is taken as $4$ and $8$ for the low and high scales.
	The experiments are repeated five times at each setup, i.e., we have done totally $5\times 3\times 2$ times training. Figure \ref{fig:app1} shows all the results of relative $L^2$ error. It is easily to conclude that our BINet is the best and outperforms related methods by a significant margin.
	
	\begin{figure}[t]
		\centering
		\subfigure[a=4.]{
			%\begin{minipage}[t]{0.5\linewidth}
			\centering
			\includegraphics[width=2.5in]{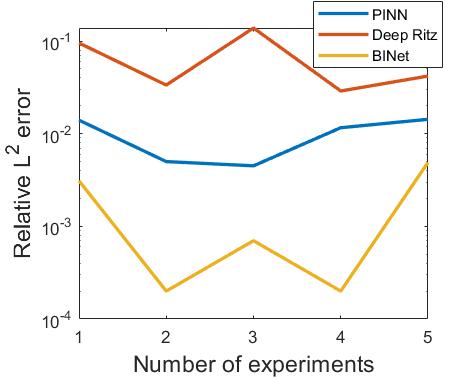}
			%\caption{fig1}
			%\end{minipage}%
		}%
		\subfigure[a=8.]{
			%\begin{minipage}[t]{0.5\linewidth}
			\centering
			\includegraphics[width=2.5in]{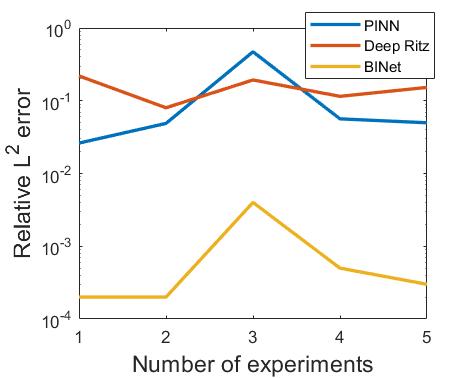}
			%\caption{fig2}
			%\end{minipage}%
		}%
		
		\centering
		\caption{The relative $L^2$ error equation of \eqref{Ex1_eq_app} by using PINN, Deep Ritz, and BINet five repeated experiments. The left figure shows the results of $a=4$, and the right one shows the results of $a=8$.}
		\label{fig:app1}
	\end{figure}

	\textbf{Laplace Equation with Non-smooth Boundary Condition.} In this experiment, we consider a Laplace equation problem
	\begin{equation}
		\begin{aligned}
			-\Delta u(x,y)=0,\ (x,y)\in\Omega,\\
			u(x,y) = 2-|x|-|y|, \ (x,y)\in\partial\Omega.
		\end{aligned}
		\label{app_eq2}
	\end{equation}
	with the non-smooth boundary condition.  The performance of the PINN, Deep Ritz, and BINet has been shown in the main part of the paper. In this appendix, the numerical experiments using each method are run five times and the results of all relative $L^2$ error are shown in Figure \ref{fig:app2}. It is easy to find that the results using BINet have much higher accuracy than others with more than 100x decrease of the $L^2$ error at most. Moreover, the numerical stability of BINet is also much better.
	
	\begin{figure}[t]
		\centering
		%\begin{minipage}[t]{0.25\linewidth}
		\centering
		\includegraphics[width=0.6\textwidth]{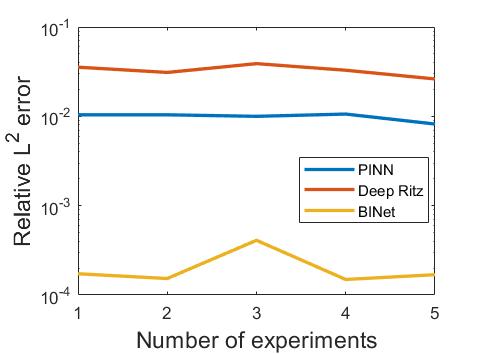}
		%\end{minipage}%
		\centering
		\caption{The relative $L^2$ error equation of \eqref{app_eq2} by using PINN, Deep Ritz, and BINet of 5 repeated experiments.}
		\label{fig:app2}
	\end{figure}

	\textbf{Helmholtz Equation with Different Wave numbers.} In the main part of the paper, we have considered a Helmholtz equation with different wave numbers. The Helmholtz equation problem has following form
	\begin{equation}
		\begin{aligned}
			-\Delta u(x,y) - k^2u(x,y) = 0,\ (x,y)\in\Omega,\\
			u(x,y) = e^{i(k_1x+k_2y)}, \ (x,y)\in\partial\Omega,
		\end{aligned}
		\label{eq:Ex3_app}
	\end{equation}
	where $(k_1,k_2)=(k\cos \frac{\pi}{7},k\sin\frac{\pi}{7})$, and the boundary has the parametric representation $\partial\Omega=\{(\frac{9}{20}\cos(t)-\frac{\cos(5t)}{9}\cos(t),\frac{9}{20}\sin(t)-\frac{\cos(5t)}{9}\sin(t))|t\in[0,2\pi]\}$. In the experiment, we assume $k=1$ and $4$. We also repeated the experiments 5 times for both PINN and BINet methods. The relative $L^2$ error of each experiment has shown in Figure \ref{fig:app3}.

	\begin{figure}[t]
		\centering
		\subfigure[k=1.]{
			%\begin{minipage}[t]{0.5\linewidth}
			\centering
			\includegraphics[width=2.5in]{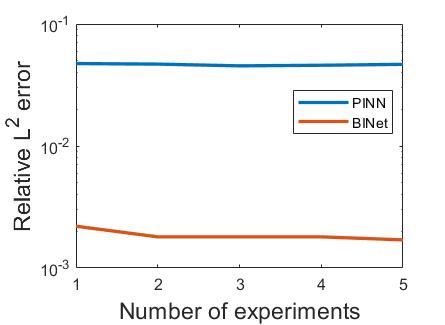}
			%\caption{fig1}
			%\end{minipage}%
		}%
		\subfigure[k=4.]{
			%\begin{minipage}[t]{0.5\linewidth}
			\centering
			\includegraphics[width=2.5in]{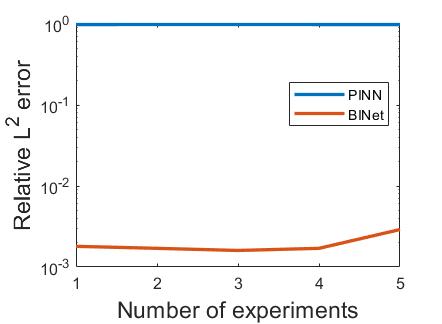}
			%\caption{fig2}
			%\end{minipage}%
		}%
		
		\centering
		\caption{The relative $L^2$ error equation of \eqref{eq:Ex3_app} by using PINN and BINet of 5 repeated experiments. The left figure shows the results of $k=1$, and the other shows the result of $k=4$.}
		\label{fig:app3}
	\end{figure}
	In Figure \ref{fig:app3} we can see the results of repeated experiments are similar. This also confirms the stability of our method. We know the difficulty of the Helmholtz equation will increase with the increase of the wave number. PINN method failed when the wave number $k=4$, but the relative $L^2$ error of the BINet method has no obvious difference between $k=4$ and $k=1$.

	\noindent \textbf{The Operator from Equation Parameters to Solutions.} In the main part of the paper, we have verified the ability of BINet to learn operators and solve PDEs on the unbounded domain. We also tested the generalization ability of BINet. We consider the Helmholtz equations with various wave numbers.
	\begin{equation}
		\begin{aligned}
			-\Delta u(x,y) - k^2u(x,y) = 0,\ (x,y)\in\Omega,\\
			u(x,y) = H_0^1(k\sqrt{x^2+y^2}), \ (x,y)\in\partial\Omega,
		\end{aligned}
		\label{eq:app4}
	\end{equation}
	In the training phase, we set $k\in[2,3.5]\cup[4.5,6]$.
	Figure \ref{fig:app4} shows the relative $L^2$ error of different wave numbers. Five lines of different colors represent the results of five repeated experiments. We can see the five results are similar, too. When BINet learns an operator to solve a parametric PDE, this method still has good numerical stability. 
	
	\begin{figure}[t]
		\centering
		%\begin{minipage}[t]{0.25\linewidth}
		\centering
		\includegraphics[width=0.6\textwidth]{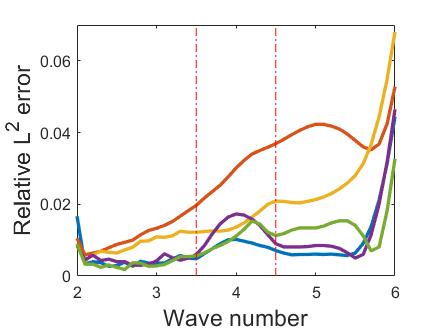}
		%\end{minipage}%
		\centering
		\caption{The relative $L^2$ error equation of \eqref{eq:app4} with different wave numbers by using and BINet of five repeated experiments.}
		\label{fig:app4}
	\end{figure}

	\noindent \textbf{The Operator from Boundary Geometry to Solutions.} In the final experiment of the main paper, we used the BINet method to learn a operator from boundary  geometry to solutions. We consider a Laplace equation with parametric boundaries. The problem is 
	\begin{equation}
		\begin{aligned}
			-\Delta u(x,y) & = 0, \ &(x,y)\in\Omega_\beta,\\
			u(x,y) &= g(x,y;\beta), \ &(x,y)\in\partial\Omega_\beta,
		\end{aligned}
		\label{eq:ex5_app}
	\end{equation}
	where the boundary condition $g(x,y;\beta)=(x-x_{bc})(y-y_{bc})+(x-x_{bc})+(y-y_{bc})+1$, and $(x_{bc},y_{bc})$ is the barycenter of the $\Omega_\beta$. For details of the parametric boundary, please refer to the main paper. We repeated the experiment five times by using the BINet method. After each experiment, we selected 100 triangles and calculate the relative $L^2$ error for each triangle. Figure \ref{fig:app5} shows the average relative $L^2$ error of the 100 triangles of each experiment.

	\begin{figure}[t]
		\centering
		%\begin{minipage}[t]{0.25\linewidth}
		\centering
		\includegraphics[width=0.6\textwidth]{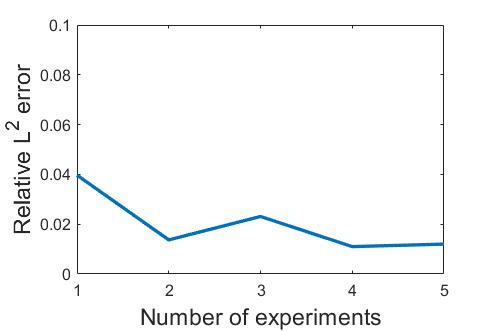}
		%\end{minipage}%
		\centering
		\caption{The relative $L^2$ error equation of \eqref{eq:ex5_app} by using BINet of five repeated experiments. The relative $L^2$ error of each experiment is the average over 100 triangles.}
		\label{fig:app5}
	\end{figure}
	
	\section{More Experiments}
	
	\textbf{Comparison of single layer potential and double layer potential.} In this experiment, we use the single layer potential and double layer potential to construct BINet, respectively. We also consider this Helmholtz equation with wave number $k=4$,
	\begin{equation}
		\begin{aligned}
			-\Delta u(x,y) - 16u(x,y) = 0,\ (x,y)\in\Omega,\\
			u(x,y) = e^{i(k_1x+k_2y)}, \ (x,y)\in\partial\Omega,
		\end{aligned}
		\label{eq:Ex6}
	\end{equation}
	where $\partial\Omega$ has the same parametric representation as the previous boundary of the problem \eqref{eq:Ex3_app}, and in this example, we also assume $(k_1,k_2)=(4\cos\frac{\pi}{7},4\sin\frac{\pi}{7})$.
	We can know the exact solution of this problem is 
	\begin{equation*}
		u(x,y) = e^{i(k_1x+k_2y)}, \ (x,y)\in\Omega.
	\end{equation*}
	We select 600 sample points on the boundary to construct the loss function, and we use the Adam optimizer with learning rate 0.0001. After 40000 training epochs, we random 1000 points in $\Omega$. By comparing with the exact solution, the relative $L^2$ error is approximate 0.0086 for single layer potential and 0.0016 for double layer potential, and we have the following results of the loss function and error.
	\noindent
	\begin{figure}[t]
		\centering
		\includegraphics[width=0.31\textwidth]{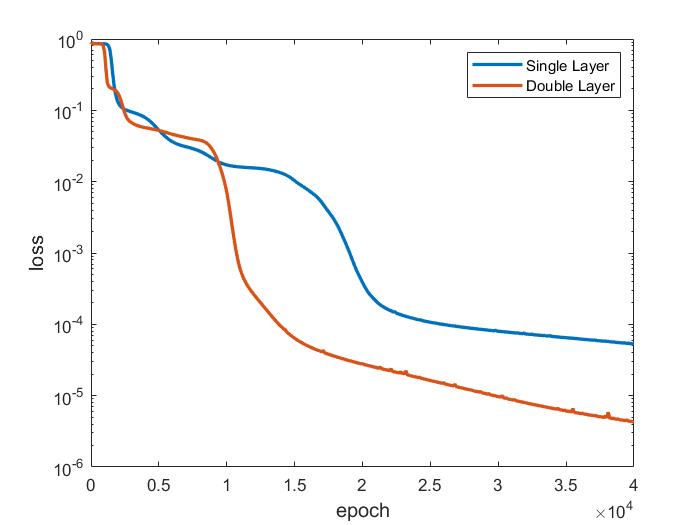}
		\includegraphics[width=0.32\textwidth]{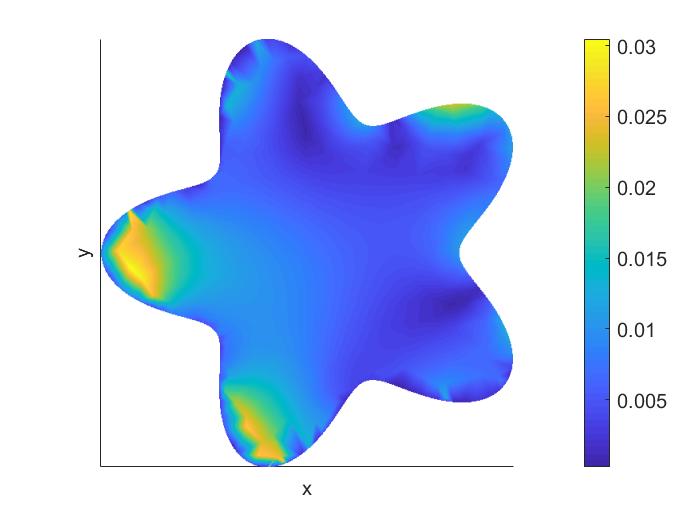}
		\includegraphics[width=0.32\textwidth]{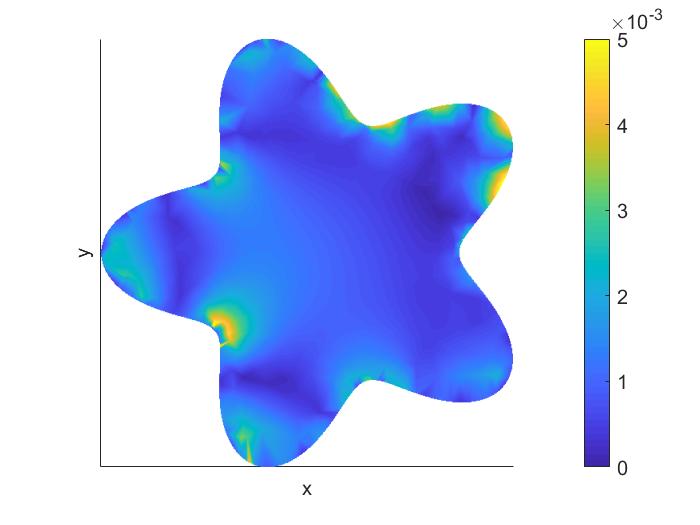}
		\caption{The results of the equation \eqref{eq:Ex6}. The first figure is the loss function of BINet with single and double layer potentials respectively. The two figures on the right are the absolute error between the numerical solutions and the exact solution with single and double layer potentials respectively. }
		\label{fig:ex6}
	\end{figure}
	From Figure \ref{fig:ex6}, we can see the solution of the double layer potential is better than the solution of the single layer potential. In fact, because of the jump of the double layer potential at the boundary, the condition number of the double layer potential is better than the single, and the BINet with double layer potential converges faster.

	\noindent \textbf{Laplace equation on unbounded domain.} We assume the boundary of $\Omega$ is same as the previous experiment. We will use BINet to solve the following equation
	\begin{equation}
		\begin{aligned}
			-\Delta u(x,y) = 0,\ (x,y)\in\Omega^c,\\
			u(x,y) = \frac{x}{x^2+y^2}, \ (x,y)\in\partial\Omega,
		\end{aligned}
		\label{eq:app7}
	\end{equation}
	The exact solution of this example is 
	$$
	u(x,y) = \frac{x}{x^2+y^2},\ (x,y)\in\Omega^c.
	$$
	In this experiment, the double layer potential is used to construct the loss function. Adam optimizer is used in the optimization with the learning rate of 0.001. We selected 600 sample points on the boundary. After 20000 training epochs, the relative $L^2$ error is about 0.0030, and the error map is shown in Figure \ref{fig:eq7}. This experiment verifies the feasibility of BINet to solve the exterior Laplace equations on the unbounded domain.  
	
	\noindent
	\begin{figure}[t]
		\centering
		\includegraphics[width=0.5\textwidth]{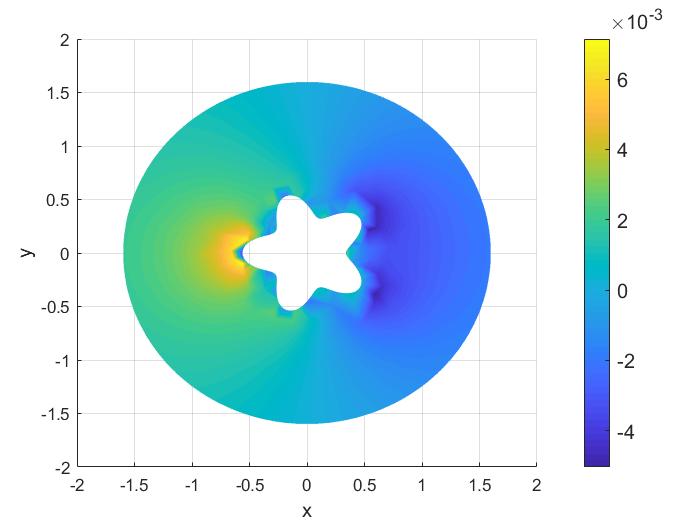}
		\caption{The result of the equation \eqref{eq:app7}. The figure shows the error between the solution of BINet and the exact solution on the domain $\{(x,y)|x^2+y^2<1.6^2\}\cap\Omega^c$.}
		\label{fig:eq7}
	\end{figure}
	
	\noindent \textbf{Helmholtz equations with high wave numbers.}
	In this experiment, we consider an interior problem of Helmholtz equation on a bounded domain with wave number $k=10$.
	Let us consider this Helmholtz equation in an interior domain
	\begin{equation}
		\begin{aligned}
			-\Delta u(x,y) - 100u(x,y) = 0,\ (x,y)\in\Omega_1,\\
			u(x,y) = e^{i(k_1x+k_2y)}, \ (x,y)\in\partial\Omega_1,
		\end{aligned}
		\label{eq:ex8_1}
	\end{equation}
	where $(k_1,k_2)=(10\cos\frac{\pi}{5},10\sin\frac{\pi}{5})$,
	and in an exterior domain with wave number $k=8$
	\begin{equation}
		\begin{aligned}
			-\Delta u(x,y) -64u(x,y) = 0,\ (x,y)\in\Omega_2^c,\\
			u(x,y) = H_0^1(8\sqrt{(x-0.5)^2+y^2}), \ (x,y)\in\partial\Omega_2,
		\end{aligned}
		\label{eq:ex8_2}
	\end{equation}
	where $H_0^1$ is the first kind Hankel function. The boundaries $\partial\Omega_1$ and $\partial\Omega_2$ are shown in Figure \ref{fig:app8}. By BINet, the relative $L^2$ errors of solutions of the equations \eqref{eq:ex8_1}
	and \eqref{eq:ex8_2} are $1.5\%$ and $2.8\%$, respectively. 
	\noindent
	\begin{figure}[H]
		\centering
		
		\includegraphics[width=0.48\textwidth]{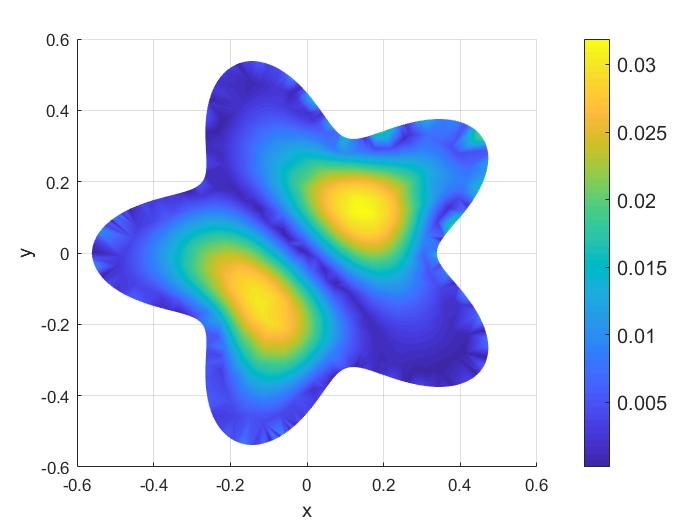}
		%\caption{fig1}
		\centering
		\includegraphics[width=0.48\textwidth]{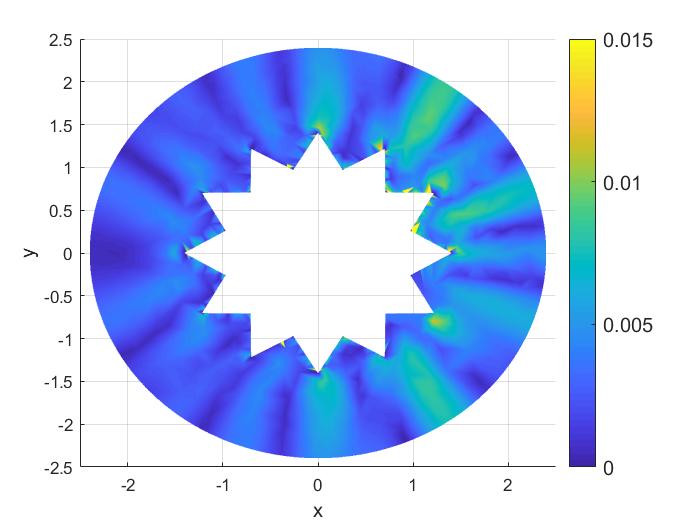}
		\caption{The results of equations \eqref{eq:ex8_1} and \eqref{eq:ex8_2}. The left figure is the absolute error of the equation \eqref{eq:ex8_1} and the other is the absolute error of the equation \eqref{eq:ex8_2}.}
		\label{fig:app8}
	\end{figure}

\end{document}